\newtheorem{theorem}{Theorem}[section]
\newtheorem{lemma}[theorem]{Lemma}
\theoremstyle{definition}
\theoremstyle{plain}
\theoremstyle{plain}
\numberwithin{equation}{section}
\begin{document}
	\pagestyle{myheadings}
	\begin{titlepage}
		\title{\bf Hamiltonian laceability with set of faulty edges in hypercubes}
		\author{Abid Ali {\small and} Weihua Yang$^{*}$ \\
			{\it \small College of Mathematics, Taiyuan University of Technology, Taiyuan, Shanxi, China, 030024}\\
			{\it \small Emails:\ }
			{\it \small lian0003@link.tyut.edu.cn}\\
		{\it \small yangweihua@tyut.edu.cn}
		} 
	\end{titlepage}
	\date{}
	\maketitle \vskip 0.1cm \centerline
	\hrulefill
\begin{abstract}	
Faulty networks are useful because link or node faults can occur in a network. This paper examines the Hamiltonian properties of hypercubes under certain conditional faulty edges. Let consider the hypercube \( Q_n \), for \( n \geq 5 \) and set of faulty edges \( F \) such that \( |F| \leq 4n - 17 \). We prove that a Hamiltonian path exists connecting any two vertices in \( Q_n - F \) from distinct partite sets if they verify the next two conditions: (i) in $Q_n - F$ any vertex has a degree at least 2, and  (ii) in $Q_n - F$ at most one vertex has a degree exactly equal to 2. These findings provide an understanding of fault-tolerant properties in hypercube networks.
\end{abstract}
	\noindent \emph{\bf Keywords}:~{Hamiltonian laceable, Hamiltonian path,  Fault tolerant, Hypercube network}.
	
	\maketitle
	\hrulefill
	
\section{Introduction}
A graph, \( G = (V(G), E(G)) \) models the topology of an interconnection network, where \( V(G) \), is the vertex set corresponds to the processors and \( E(G) \), is the edge set denotes the connections between these processors. Investigating the properties of \( G \) is necessary for designing an efficient network topology. Paths and cycle structures are basic topologies in parallel and distributed systems,  which are especially well-suited for local area networks and for developing parallel algorithms that minimize communications costs \cite{new1}.

In a graph \( G \), a path (respectively, cycle)  is classified as a Hamiltonian path (respectively, cycle) if every vertex of \( G \) appears in path one and only one time. Particularly, a Hamiltonian path visits every vertex only once without forming a cycle, whereas a Hamiltonian cycle returns to its starting vertex. A graph is Hamiltonian if it has a Hamiltonian cycle. Furthermore, if a Hamiltonian path exists between every two distinct vertices in the graph, the graph is considered Hamiltonian-connected. A bipartite graph, \( G = (V_0 \cup V_1, E) \) is one in which the vertex set \( V(G) \) can be divided into two disjoint subsets, \( V_0 \) and \( V_1 \), such that each edge in \( G \) connects a vertex from \( V_0 \) to a vertex in \( V_1 \). A fundamental question in studying the Hamiltonian properties of a graph is determining whether it is Hamiltonian-connected or Hamiltonian. Although in Hamiltonian bipartite graphs  \( |V_0| = |V_1| \), as a Hamiltonian cycle must alternate between vertices in \( V_0 \) and \( V_1 \). Consequently, not all Hamiltonian bipartite graphs are Hamiltonian-connected. For those Hamiltonian bipartite graphs, the concept of Hamiltonian laceability was presented by Simmons \cite{new2}. A bipartite graph \( G = (V_0 \cup V_1, E) \), if connecting any two vertices \( x \) and \( y \) such that \( y \in V_1 \) and \( x \in V_0 \), is Hamiltonian laceable.

The hypercube graph $Q_n$ is an efficient and most commonly used interconnection network. Since for every $n \geq 2$, hypercube $Q_n$ has a Hamiltonian cycle \cite{1} and for every $n \geq 1$ is Hamiltonian laceable \cite{H84}. For the stability of a network, fault tolerance is a crucial index. Faulty networks are useful because link faults or node faults may appear in networks \cite{new3,new4,new5,new6,new7,new8, new11}.

In the hypercube \( Q_n \) with $F$ is a set of faulty edges and $x, y$ is a pair of vertices,  whether a Hamiltonian cycle or (Hamiltonian path) exists in  \( Q_n - F \) connecting vertices \( y \) and \( x \) \cite{new9}? For \( n \geq 3 \), Chan and Lee \cite{CM91} verified that a fault-free Hamiltonian cycle exists if every vertex is connected to at least two non-faulty edges, even when faulty edges are up to \( (2n - 5) \). For a hypercube, the graph depicted in Figure \ref{Fig.8}, this result is optimal since the subgraph contains no fault-free Hamiltonian cycle. Further this result was extended by Liu and Wang \cite{new13}, and proved that for \( n \geq 5 \) with \( |F| \leq 3n - 8 \), under two conditions: (i) Every vertex in \( Q_n - F \) must have a degree at least 2, and (ii) no two non-adjacent vertices within a 4-cycle can both have a degree of 2. Moreover, Lee et al. \cite{new10} derived that if every vertex in \( Q_n-F \) has a degree of at least three, then the graph \( Q_n - F \) has a Hamiltonian cycle.
\begin{figure}[H]
	\centering
	\includegraphics[width=0.45\linewidth, height=0.27\textheight]{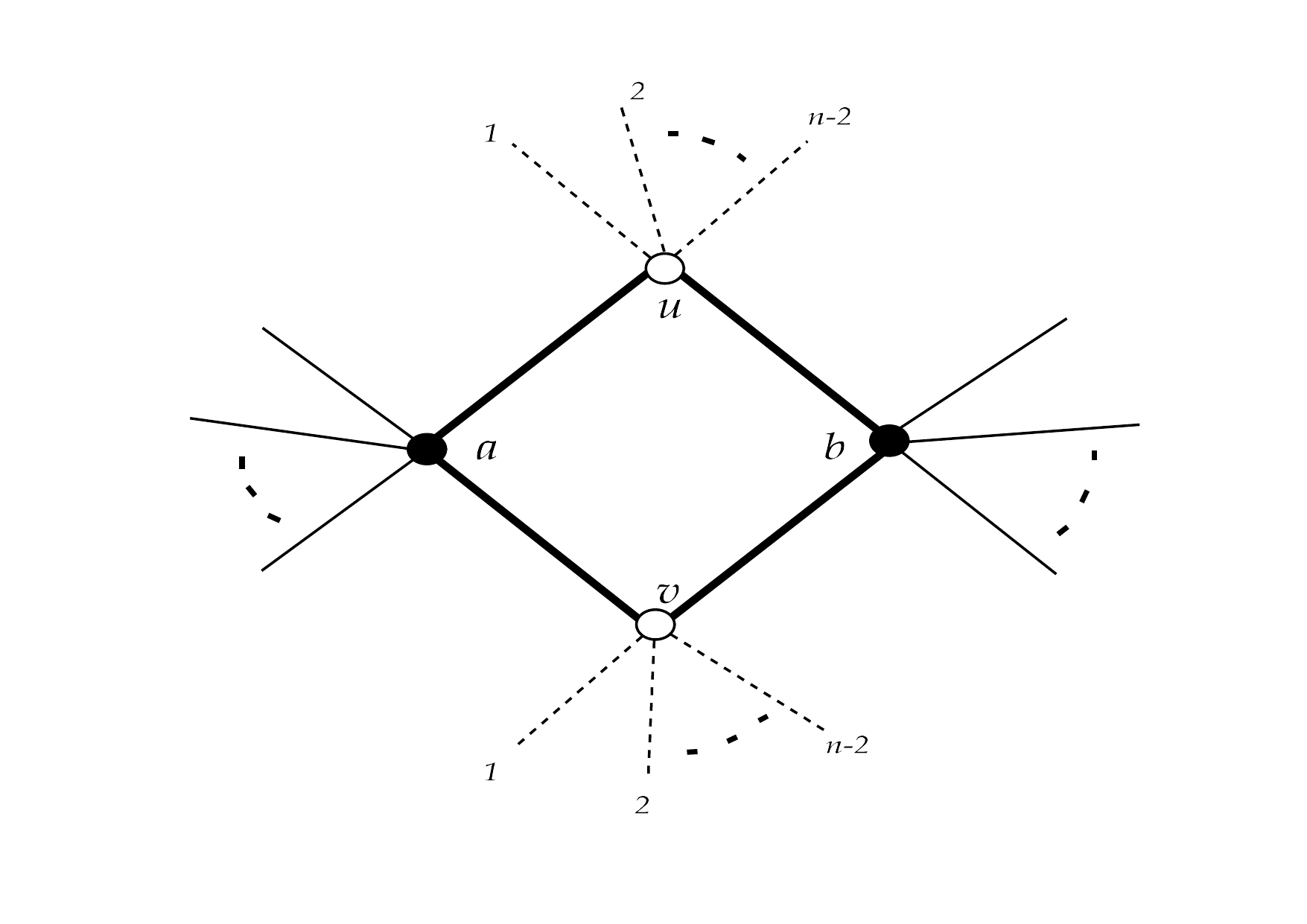}
	\caption[A]{The dashed lines represents faulty edges in forbidden subgraph.}\label{Fig.8}
\end{figure}

Tsai \cite{new12} studied the existence of a Hamiltonian path connecting any two vertices from distinct partite sets in a faulty hypercube under the case that the faulty edges \( |F| \leq 2n - 5 \). Note that if the hypercube \( Q_n \) contains a subgraph similar to those depicted in Figure \ref{Fig.9}(a)-(c), then no Hamiltonian path between vertices \( y \) and \( x \) exists in \( Q_n - F \). Later, Wang and Zhang \cite{new11} extended this study by considering hypercubes with conditionally faulty edges and derived the next result.
\begin{figure}[H]
	\centering
	\includegraphics[width=0.9\linewidth, height=0.24\textheight]{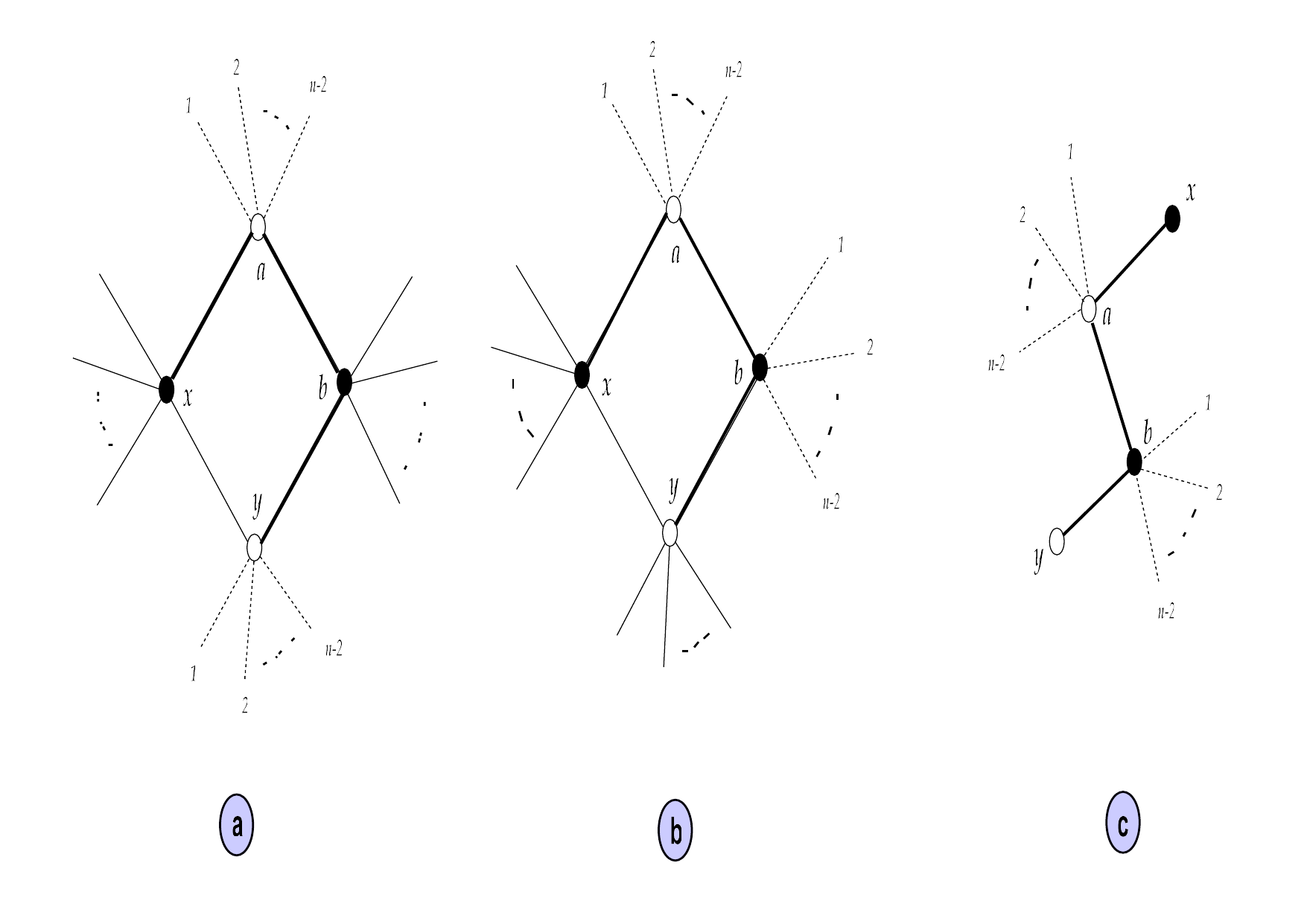}
	\caption[A]{The dashed lines represents faulty edges in forbidden subgraph.}\label{Fig.9}
\end{figure}
\begin{theorem}[\cite{new11}] \label{n1}
Consider faulty edge set \( F \) in \( Q_n \) for \( n \geq 4 \), such hat \( |F| \leq 3n - 11 \). Then, the graph is Hamiltonian laceable when, \( \delta(Q_n - F) \geq 2 \), and contain one vertex at most in \( Q_n - F \) having degree exactly  \( 2 \).
\end{theorem}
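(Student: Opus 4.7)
The plan is to argue by induction on $n$. The base case $n=4$ allows $|F|\le 1$, and can be handled by exhibiting a Hamiltonian path for each pair $(x,y)$ in opposite partite sets, either directly from the known Hamiltonian laceability of $Q_4$ or by a short case analysis when a single faulty edge is present. For the inductive step I would assume the statement for $Q_{n-1}$ with up to $3(n-1)-11 = 3n-14$ conditional faults and argue it for $Q_n$ with $|F|\le 3n-11$.

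The driving idea is the standard split: pick a dimension $i$ and decompose $Q_n$ into two copies $Q_{n-1}^{0}, Q_{n-1}^{1}$ of $Q_{n-1}$, joined by a perfect matching of crossing edges in dimension $i$. Write $F_j = F\cap E(Q_{n-1}^{j})$ for $j=0,1$ and $F_c$ for the faulty crossing edges, so $|F_0|+|F_1|+|F_c|=|F|$. I want $i$ chosen so that (a) $|F_j|\le 3n-14$ for $j=0,1$, (b) the minimum degree in each $Q_{n-1}^{j}-F_j$ is at least $2$, and (c) each subcube has at most one vertex of degree exactly $2$. Since there are $n$ candidate dimensions, a pigeonhole estimate on $|F|\le 3n-11$ typically forces $|F_c|\ge 3$ for some $i$, which cuts $|F_0|+|F_1|$ enough to meet (a); conditions (b) and (c) are further refined by preferring a dimension on which the (possible) degree-$2$ vertex $v$ of $Q_n-F$ has a non-faulty edge, and on which the neighborhoods of the low-degree vertices are not all concentrated.

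Once the split is fixed, the argument bifurcates according to whether $x$ and $y$ lie in the same subcube. If $x,y\in Q_{n-1}^{0}$, apply the inductive hypothesis to obtain a Hamiltonian path $P_0$ from $x$ to $y$ in $Q_{n-1}^{0}-F_0$; then locate a healthy edge $(u,v)$ of $P_0$ such that the crossings $(u,u')$ and $(v,v')$ are both non-faulty, and use induction in $Q_{n-1}^{1}-F_1$ to build a Hamiltonian path $P_1$ from $u'$ to $v'$ (valid because $u,v$ are adjacent, hence $u',v'$ lie in opposite partite classes of $Q_{n-1}^{1}$, which inherits the proper bipartition). Splicing $P_0$ at $(u,v)$ and inserting $P_1$ yields the required Hamiltonian path. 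If instead $x\in Q_{n-1}^{0}$ and $y\in Q_{n-1}^{1}$, pick a bridge vertex $u\in Q_{n-1}^{0}$ with non-faulty crossing $(u,u')$, such that the partite-class parities allow induction to produce a Hamiltonian $x$-$u$ path in $Q_{n-1}^{0}-F_0$ and a Hamiltonian $u'$-$y$ path in $Q_{n-1}^{1}-F_1$; concatenation finishes this case.

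The hard part will be the \emph{degenerate} splits, namely those in which every dimension forces either a violation of the degree hypothesis in some subcube, concentrates the faults into a single $F_j$, or leaves too few admissible bridge pairs $(u,u')$ for the splicing argument. This is where the exact bound $3n-11$ is delicate: one must count, for a fixed $P_0$ or a fixed pair $(x,y)$, how many candidate bridges are killed by faulty crossings, by conflict with the degree-$2$ vertex, or by being forced to share a $4$-cycle with another low-degree vertex. My plan for this step is to treat separately the subcases (i) $v$ (the degree-$2$ vertex of $Q_n-F$) is in the same subcube as both $x,y$, (ii) $v$ separates from $\{x,y\}$ under the split, and (iii) no such $v$ exists; in each, the refined dimension selection plus an explicit lower bound on the number of unblocked bridge pairs should suffice, falling back on \textbf{Theorem}~\ref{n1}-style arguments from \cite{new11} within the subcube as needed.
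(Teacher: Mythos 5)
This statement is Theorem~\ref{n1}, which the paper imports verbatim from \cite{new11} and does not prove; there is no in-paper proof to compare against, so your proposal has to stand on its own. Its skeleton --- split $Q_n$ along a dimension into two $(n-1)$-subcubes, push the degree conditions and the fault budget into the subcubes, then splice Hamiltonian paths across non-faulty crossing edges --- is indeed the standard strategy (it is also how the present paper proves its own Theorem~\ref{main}), so the approach is not wrong in kind.

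The gap is that the one quantitative step you use to make the induction close is false, and the cases where it fails are exactly where all the work lives. You claim pigeonhole on $|F|\le 3n-11$ over $n$ dimensions ``typically forces'' $|F_c|\ge 3$ for some dimension, which would give $|F_0|+|F_1|\le 3n-14=3(n-1)-11$. But $\max_i|F\cap E_i|\ge 3$ is only guaranteed when $|F|>2n$, i.e.\ $n\ge 12$; for $4\le n\le 11$ the faults can be spread so that every dimension carries at most $2$ of them, leaving some $|F_j|$ as large as $3n-13$, which exceeds the inductive bound. These overflow cases ($|F_0|\in\{3n-13,3n-12,3n-11\}$) cannot be waved away: they are the analogue of Cases 2--4 in the present paper's proof of Theorem~\ref{main}, where one must temporarily restore selected faulty edges, reroute around faulty crossings via detours $u_0u_0'$, and invoke spanning $2$- and $3$-path results (Theorems~\ref{dt}, \ref{TC}, \ref{a8}); your proposal only names this as ``the hard part'' and defers it. Two further problems: (i) verifying that the chosen dimension preserves $\delta\ge 2$ and the at-most-one-degree-$2$-vertex condition in both subcubes requires a dedicated selection lemma (compare Lemma~\ref{new1} here), which you sketch but do not establish; and (ii) your stated fallback of using ``Theorem~\ref{n1}-style arguments from \cite{new11}'' inside the subcubes is circular, since Theorem~\ref{n1} is precisely what you are trying to prove. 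As written, the proposal is a plausible plan rather than a proof.
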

In this paper, we show that the result can be extended up to \( 4n - 17 \) faulty edges for \( n \geq 5 \),  a Hamiltonian path exists in  \( Q_n - F \), between every two vertices from distinct partite sets, and holds the below two conditions.
 
(i) Each vertex in \( Q_n - F \) has a degree of at least 2.
 
(ii) There is at most one vertex in \( Q_n - F \) with a degree of 2, and gives the proof of the following theorem.
 \begin{theorem}\label{main}
For \( n \geq 5 \), consider $F$ is faulty edges set in \( Q_n \) such that \( |F| \leq 4n - 17 \).  When, \( \delta(Q_n - F) \geq 2 \), one vertex is at most exists in \( Q_n - F \) with degree 2, then the graph is Hamiltonian laceable.
 \end{theorem}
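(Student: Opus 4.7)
My plan is to prove Theorem~\ref{main} by induction on $n$. The base cases $n\in\{5,6\}$ follow directly from Theorem~\ref{n1}, since the inequality $4n-17\le 3n-11$ holds precisely when $n\le 6$, so every conditionally faulty edge set of size at most $4n-17$ in those dimensions is already covered. For the inductive step I assume $n\ge 7$ and that the statement holds in $Q_{n-1}$, where up to $4(n-1)-17=4n-21$ conditional faulty edges are allowed.

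The central tool is the standard dimension--split of $Q_n$. For a coordinate $d^*\in\{1,\dots,n\}$, write $L$ and $R$ for the two $Q_{n-1}$-subcubes fixed by $d^*=0,1$ respectively, and let $E_c$ be the set of crossing edges along $d^*$. Put $F_L=F\cap E(L)$, $F_R=F\cap E(R)$, $F_c=F\cap E_c$. A pigeonhole/counting argument on $\sum_{d} k_d=|F|$, where $k_d$ is the number of faulty edges along dimension $d$, should allow me to pick $d^*$ so that $|F_L|,|F_R|\le 4n-21$, the condition $\delta\ge 2$ is inherited by $L-F_L$ and $R-F_R$, and each side still has at most one vertex of degree exactly $2$; in such a split the inductive hypothesis applies in both halves.

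Next I would split into cases on the positions of the endpoints $x,y$. If $x\in L$ and $y\in R$, I locate a non-faulty crossing edge $(u,u')$ with $u\in L$, $u'\in R$, and with $u,x$ in opposite partite sets of $L$, apply induction to obtain Hamiltonian paths from $x$ to $u$ in $L-F_L$ and from $u'$ to $y$ in $R-F_R$, and concatenate them through $(u,u')$. If $x,y$ are both in $L$, I first build a Hamiltonian path $P_L$ from $x$ to $y$ in $L-F_L$ by induction, then replace a well-chosen edge $(u,v)$ of $P_L$ by $(u,u')\cup P_R\cup (v',v)$, where $P_R$ is a Hamiltonian path of $R-F_R$ joining $u'$ and $v'$ from opposite partite sets; the count $|F_c|+|F_R|\le 4n-17$ against the $2^{n-1}-1$ edges of $P_L$ should leave many admissible splice edges.

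The main obstacle is the exceptional regime in which no dimension gives a balanced split, i.e.\ every candidate $d^*$ yields some $|F_L|\ge 4n-20$, so the inductive hypothesis fails directly in one half. Such heavy concentration forces the complementary bound $|F_R|+|F_c|\le 3$, leaving $R$ almost fault-free; in this case I plan a bespoke construction that performs nearly all the Hamiltonian routing inside $R$ and uses only a short local detour inside $L$ around the endpoints $x,y$, together with a second layer of bookkeeping on the unique degree-$2$ vertex. The latter is the subtle point: deletion of the crossing faults in $F_c$ may drop a boundary vertex in $L$ or $R$ to degree $2$, potentially producing a second degree-$2$ vertex in a subcube and violating condition (ii) of the induction hypothesis. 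Ensuring that at most one such vertex appears on each side---or, when it does not, performing a further refined reselection of $d^*$ or of the splice edge---is where I expect the bulk of the technical case work to lie.
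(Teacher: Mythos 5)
Your overall skeleton---induction on $n$ with base cases $n\in\{5,6\}$ from Theorem~\ref{n1}, a dimension split chosen so that both $(n-1)$-subcubes inherit the minimum-degree conditions, and the standard splice/concatenation constructions when both halves satisfy the inductive fault bound---is exactly the paper's Case~1 (and the degree-inheritance step is the content of the paper's Lemma~\ref{new1}). The genuine gap is in your treatment of the exceptional regime, which is unavoidable: one cannot in general choose $d^*$ with $|F_L|,|F_R|\le 4n-21$, since the faults may be spread over many dimensions (at most three per dimension) yet concentrated in one subcube, so that every split leaves some side with $|F_L|\in\{4n-20,4n-19,4n-18\}$. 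For that regime you propose to ``perform nearly all the Hamiltonian routing inside $R$'' with ``only a short local detour inside $L$ around the endpoints.'' This cannot work: a Hamiltonian path of $Q_n-F$ must visit all $2^{n-1}$ vertices of $L$, so its restriction to $L$ is a spanning collection of paths of $L-F_L$, never a short local detour. Your plan supplies no mechanism for producing a spanning structure of the heavily faulted side, and that is precisely where the theorem's difficulty lies.

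What the paper does instead (its Cases~2--4, for $|F_0|=4n-20$, $4n-19$, $4n-18$) is to keep using induction in the heavy subcube but on a \emph{reduced} fault set: it deletes $1$, $2$ or $3$ carefully chosen faulty edges from $F_0$ (disjoint, avoiding the crossing faults and the exceptional degree-$2$ vertex) so that exactly $4(n-1)-17$ faults remain, applies the induction hypothesis to get a Hamiltonian path of $Q_{n-1}^0$ minus the remaining faults, and then repairs each of the at most three temporarily ignored faulty edges that this path happens to traverse by rerouting through the almost fault-free subcube $Q_{n-1}^1$; the simultaneous disjoint detours are realized by the spanning $k$-path results (Theorems~\ref{TC} and~\ref{a8} and Lemma~\ref{new4}), with an extra local rerouting when a needed crossing edge is itself faulty. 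Without this forget-and-repair mechanism (or an equivalent), your induction does not close in the concentrated-fault case, so the proposal as written has a genuine gap rather than a mere bookkeeping burden.
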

The structure of this paper is arranged as follows. Some results used in the proof of our problem are provided in Section \ref{sec2}, while Section \ref{sec3} gives the proof of our problem.
\section{Preliminaries and lemmas}\label{sec2}
The notation and terminology are defined below but undefine can be seen in \cite{bondy}. Let a graph \( G \), the edge and vertex set are represented as \( E(G) \) and \( V(G) \), respectively. Consider  \( H_1 \) and \( H_2 \) are two subgraphsof \( G \). The \( H_1 + H_2 \) is used to represent the graph with edge set \( E(H_1) \cup E(H_2) \) and vertex set \( V(H_1) \cup V(H_2) \).  The \( V(F) \) means a set of vertices incident to the edges in \( F \), where a subset \( F \subseteq E(G) \). The graph  \( H + F \) with vertex set \( V(H) \cup V(F) \) and edge set \(E(H) \cup F \). When removing the edges set  \( F \subseteq E(G) \)  from \( G \), the resultant graph is denoted as \( G - F\). Similarly, a subset \( S \subseteq V(G) \), \( G - S \) describes the graph obtained by deleting all vertices in \( S \) and all edges incident to those vertices. When  \( F = \{e\} \) or \( S = \{x\} \), the notations \( G - F \), \( G - S \),  \( H + F \),  and \( V(F) \) are simplified to  \( G - e \), \( G - x \), \( H + e \),  and \( V(e) \), respectively.

The hypercube graph $Q_n$ where $[n]=\{1, 2, \dots, n\}$  with vertex set $V(Q_n) = \{ v : v = v^1 \dots v^n\}$  and  $v^i \in \{0, 1\}$ for $i \in [n]\}$ and edge set $E(Q_n) = \{xy : |\Delta(x, y)| = 1 \}$, where $\Delta(x, y) = \{ i \in [n] : x^i \neq y^i\}$. The $dim(xy)$ is dimension of an edge $xy \in E(Q_n)$ is the integer $j$ with $\Delta(x, y) = \{ j \}$. The exact $j$-dimension edges form a layer, represented as $E_j$, in $Q_n $. Note that the edge set $E(Q_n)$ is divided into $n$ layers, each having $2^{n-1}$ edges.

The parity $p(u)$ of a vertex $u$ in $Q_n$ is described by $p(u) = \sum_{i=1}^n u^i(\text{mod 2})$. Hence, there are \( 2^{n-1} \) vertices of 0 parity and \( 2^{n-1} \) vertices of 1 parity  in \( Q_n \). The vertices with parity 0 and 1  are white and black, respectively. Since \( Q_n \) is a bipartite graph, the color classes form a bipartition of \( Q_n \), meaning that \( p(x) = p(y) \) if and only if \( d(x, y) \) is even. We extend the definition of the parity to edges \( xy \in E(Q_n) \). If \( \sum_{i=1}^n x^i < \sum_{i=1}^n y^i \), then \( p(xy) = p(x) \) otherwise, \( p(xy) = p(y) \). A set of pairs \( \{ \{ a_i, b_i \} \}_{i=1}^k \), where \( a_i \) and \( b_i \) are different vertices of \( Q_n \), is considered balanced if the number of white and black vertices are equal. In the below proofs, we generally use that \( Q_n \) is bipartite.

For every $j \in [n]$, consider $Q_{n-1, j}^{0}$ and $Q_{n-1, j}^{1}$ denote two subcubes of $(n-1)$-dimension of a hypercube $Q_n$, where the $j$ superscript be omitted if the context is clear. These subcubes are by vertices of $Q_n$ where the $j$-th coordinate is fixed at 0 or 1, respectively. Note that $Q_n - E_j = Q_{n-1}^1 \cup Q_{n-1}^0$, and we say $Q_n$ is divided into the two subcubes $Q_{n-1}^0$ and $Q_{n-1}^1$ by $E_j$ of $(n-1)$-dimension. For $\theta \in \{0, 1\}$, any vertex $u \in V(Q_{n-1}^\theta)$ has a unique neighbor in $Q_{n-1}^{1-\theta} $, denoted as $u^{1-\theta}$. Similarly, for any edge $uv=e \in E(Q_{n-1}^\theta)$, the corresponding edge in $Q_{n-1}^{1-\theta}$ is $e_{1-\theta}$ denotes the edge  $u_{1-\theta}v_{1-\theta}\in E(Q_{n-1}^{1-\theta})$.
\begin{lemma}\textup{(\cite{T07})}\label{t07}
Consider two disjoint edges $f$ and $e$ in $Q_n $ for $n \geq 2$. Then, split the hypercube $Q_n$ into two $(n-1)$-dimensional subcubes, one has $f$ and the other $e$.
\end{lemma}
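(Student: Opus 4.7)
The plan is to translate the geometric statement into a purely combinatorial condition on coordinates, and then show that the required coordinate must exist by a short exhaustion using the hypothesis that the two edges share no endpoint.

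Write $e = x_1 x_2$ with $\dim(e) = a$, so $x_2 = x_1 \oplus \mathbf{1}_a$, and similarly $f = y_1 y_2$ with $\dim(f) = b$ and $y_2 = y_1 \oplus \mathbf{1}_b$. For a splitting dimension $j \in [n]$, the edge $e$ lies entirely inside one subcube $Q_{n-1,j}^{\theta}$ iff $j \neq a$, in which case $x_1^j = x_2^j$; analogously for $f$ one needs $j \neq b$. Once $j \notin \{a,b\}$, both edges are contained in subcubes, and they land in different subcubes iff $x_1^j \neq y_1^j$. So the lemma reduces to proving that the set $D := \Delta(x_1,y_1)$ of coordinates on which $x_1$ and $y_1$ disagree is not a subset of $\{a,b\}$.

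The key step is to rule out each possibility $D \subseteq \{a,b\}$ using the disjointness hypothesis $\{x_1,x_2\}\cap\{y_1,y_2\}=\varnothing$. I would simply enumerate:
\begin{itemize}
\item $D=\varnothing$ forces $y_1=x_1$, a contradiction.
\item $D=\{a\}$ forces $y_1 = x_1 \oplus \mathbf{1}_a = x_2$, a contradiction.
\item $D=\{b\}$ (only possible if $b\neq a$) forces $y_1 = x_1\oplus\mathbf{1}_b$, hence $y_2 = y_1\oplus\mathbf{1}_b = x_1$, a contradiction.
\item $D=\{a,b\}$ (with $a\neq b$) forces $y_1 = x_1\oplus\mathbf{1}_a\oplus\mathbf{1}_b$, hence $y_2 = x_1\oplus\mathbf{1}_a = x_2$, a contradiction.
\end{itemize}
In every case disjointness is violated, so $D\not\subseteq\{a,b\}$.

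Therefore there exists $j \in D\setminus\{a,b\}$, and splitting $Q_n$ along $E_j$ puts $e$ entirely in one of $Q_{n-1,j}^{0},Q_{n-1,j}^{1}$ and $f$ entirely in the other. I do not anticipate a real obstacle here; the only subtlety is remembering that the $a=b$ case must be treated (where only the first two items of the enumeration are available), and that for $n=2$ this still works because then necessarily $\{a,b\}=\{1,2\}$ and $D\neq\varnothing$ already forces $D\setminus\{a,b\}\neq\varnothing$ only if we first verify that $Q_2$ cannot contain two disjoint edges with both endpoints coinciding coordinate-wise outside $\{1,2\}$—which is vacuous since $[n]\setminus\{a,b\}=\varnothing$ and $Q_2$ has exactly two pairs of disjoint edges, each splittable along the remaining trivial direction; this edge case is handled by noting that $n=2$ with $a\neq b$ has $D\subseteq\{1,2\}$ always, but the four endpoints of two disjoint edges in $Q_2$ exhaust $V(Q_2)$ and force $a=b$, reducing to the first case.
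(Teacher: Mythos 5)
The paper does not prove this lemma at all---it is imported from \cite{T07} as a known result---so there is no in-paper argument to compare against; I can only judge your proof on its own. It is correct. The reduction to the combinatorial condition $D=\Delta(x_1,y_1)\not\subseteq\{a,b\}$ (with $a=\dim(e)$, $b=\dim(f)$) is exactly the right move: for $j\notin\{a,b\}$ both edges survive the split intact, and $j\in D$ places them on opposite sides. Your four-case exhaustion correctly shows that each containment $D\subseteq\{a,b\}$ forces one of $y_1,y_2$ to coincide with one of $x_1,x_2$, contradicting disjointness, and the degenerate case $a=b$ is covered because then only the first two items of the enumeration can occur. The one blemish is the closing discussion of $n=2$: it is rambling and reads as though you doubted the argument, but the general enumeration already handles $n=2$ without any special pleading, since it directly establishes $D\setminus\{a,b\}\neq\varnothing$ in all cases (and your observation that two disjoint edges of $Q_2$ must be parallel, forcing $a=b$, is correct but redundant). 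You could delete that paragraph entirely and the proof would stand.
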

A path is a \( u,v \)-path connecting the vertices \( v \) and \( u \) and denoted as \( P_{uv} \). A spanning subgraph of a graph \( G \) consisting of disjoint \( k \)  paths is known as a spanning \( k \)-path of \( G \). In particular, when \( k = 1 \), the spanning 1-path is the Hamiltonian path. If \( e \in E(P) \) then,  path \( P \) is containing the edge \( e \).

Note that the next classical result in \cite{H84} was obtained by Havel.
\begin{theorem}\textup{(\cite{H84})}\label{h84}
Consider two vwertices $x$ and $y$ in $Q_n$ with $p(y)\neq p(x)$. Then, $Q_n$ has a Hamiltonian path between $y$ and $x$.
\end{theorem}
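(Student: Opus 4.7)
The plan is to prove Havel's theorem by induction on $n$, exploiting the recursive structure of $Q_n$ as two copies of $Q_{n-1}$ joined by a perfect matching. For the base case $n=1$, the cube $Q_1$ is a single edge, and its two endpoints have opposite parity and are trivially joined by a Hamiltonian path. For the inductive step I would assume the result holds for $Q_{n-1}$ and derive it for $Q_n$, working in the subcubes $Q_{n-1}^0$ and $Q_{n-1}^1$ produced by cutting $Q_n$ along a suitable dimension $j$.

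Fix two vertices $x,y\in V(Q_n)$ with $p(x)\neq p(y)$. I would split into two cases according to whether $x$ and $y$ land in the same subcube or in different subcubes after the cut. In the first case, choose any coordinate $j$ with $x^j\neq y^j$ (such a $j$ exists because $x$ and $y$ differ in at least one coordinate), so that $x\in Q_{n-1}^0$ and $y\in Q_{n-1}^1$, say. Pick any vertex $u\in V(Q_{n-1}^0)\setminus\{x\}$ with $p(u)=p(y)$; this is possible for $n\geq 2$ because $Q_{n-1}^0$ contains $2^{n-2}\geq 1$ such vertices. Its matching partner $u^1\in V(Q_{n-1}^1)$ satisfies $p(u^1)\neq p(u)=p(y)$, and $u^1\neq y$. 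By the induction hypothesis there is a Hamiltonian path $P_0$ of $Q_{n-1}^0$ from $x$ to $u$ and a Hamiltonian path $P_1$ of $Q_{n-1}^1$ from $u^1$ to $y$, and splicing them via the crossing edge $uu^1$ yields a Hamiltonian path of $Q_n$ from $x$ to $y$.

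In the second case, choose $j$ with $x^j=y^j$, so that both $x$ and $y$ lie in $Q_{n-1}^0$, say. The induction hypothesis, applied in $Q_{n-1}^0$ to the opposite-parity pair $(x,y)$, gives a Hamiltonian path $P$ of $Q_{n-1}^0$ from $x$ to $y$. Now pick any edge $e=uv$ on $P$; since $u,v$ have opposite parity, so do their matching partners $u^1,v^1$ in $Q_{n-1}^1$, and by induction there is a Hamiltonian path $P'$ of $Q_{n-1}^1$ from $u^1$ to $v^1$. Excising $e$ from $P$ and inserting the detour $u\,\text{---}\,u^1\,P'\,v^1\,\text{---}\,v$ produces a Hamiltonian path of $Q_n$ from $x$ to $y$.

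The principal technical obstacle is verifying that the intermediate vertex $u$ (or intermediate edge $e$) with the prescribed parity requirements can always be chosen and that the two pieces can be spliced without colliding; both reduce to checking that the relevant bipartition class of $Q_{n-1}^\theta$ is nonempty, which is immediate for $n\geq 2$. A secondary but conceptually important point is the correct choice of the splitting dimension $j$: one must select $j$ so that $x$ and $y$ fall into the desired configuration (same or different subcube), and the freedom to make this choice is exactly what the structure of $Q_n$ provides, since $x$ and $y$ agree on some coordinates and disagree on others. Once these choices are fixed, the stitching is purely mechanical and the induction closes.
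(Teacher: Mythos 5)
The paper does not prove this statement; it is quoted verbatim as Havel's classical theorem from the cited reference \cite{H84}, so there is no internal proof to compare against. Your induction is the standard (essentially Havel's original) argument and it is correct: the base case $n=1$ is trivial, the different-subcube case splices two Hamiltonian paths of the subcubes through a single crossing edge $uu^1$ after choosing $u$ with $p(u)=p(y)$ (which forces $u\neq x$ and $u^1\neq y$), and the same-subcube case uses the familiar edge-expansion detour through the opposite subcube. The only cosmetic remark is that your two cases are both under your control when you pick the splitting dimension $j$, and the first case (placing $x$ and $y$ in different subcubes) is always achievable since $x\neq y$, so the second case is not actually needed; conversely the second case alone would fail for antipodal pairs. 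Including both does no harm, and the parity bookkeeping (that the bipartition of $Q_{n-1}^\theta$ agrees with the restriction of the bipartition of $Q_n$ up to a constant offset) is handled correctly.
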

\begin{lemma}\textup{(\cite{DT05})}\label{a2}
Consider $x,y \in V(Q_n)$, where $n\geq 2$, and $f \in E(Q_n)$ with $xy\neq f$ and $p(y)\neq p(x)$, a Hamiltonian path $P_{xy}$ exists in $Q_n$ containing $f$.
\end{lemma}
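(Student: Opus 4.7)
I plan to prove Lemma~\ref{a2} by induction on $n$, splitting $Q_n$ into two $(n-1)$-subcubes along a carefully chosen dimension and combining the inductive hypothesis in one subcube with Havel's theorem (Theorem~\ref{h84}) in the other. The base case $n=2$ is a direct verification: $Q_2$ is a $4$-cycle, and when $p(x)\neq p(y)$ the unique Hamiltonian $x,y$-path uses the three edges other than $xy$, so it contains any $f\neq xy$.

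For the inductive step, set $k=\dim(f)$ and choose a splitting dimension $i$ as follows: if $\Delta(x,y)\setminus\{k\}$ is nonempty, pick $i$ from this set; otherwise $\Delta(x,y)=\{k\}$, meaning $x$ and $y$ are adjacent via an edge of dimension $k$, in which case any $i\in[n]\setminus\{k\}$ will do. With this choice $f$ lies inside one of the subcubes $Q_{n-1}^{0},Q_{n-1}^{1}$ rather than across the matching $E_i$, and there are two cases: Case~A, in which $x$ and $y$ lie in different subcubes; Case~B, in which they share a subcube.

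In Case~A, assume $f$ lies in the subcube containing $x$, say $Q_{n-1}^{0}$ (the other configuration is symmetric). Pick $z\in V(Q_{n-1}^{0})$ with $p(z)\neq p(x)$ and $xz\neq f$; a count of the $2^{n-2}$ opposite-parity vertices, minus at most one in case $f$ is incident to $x$, shows such a $z$ exists whenever $n\geq 3$. The inductive hypothesis applied in $Q_{n-1}^{0}$ yields a Hamiltonian $x,z$-path $P_0$ containing $f$, while Theorem~\ref{h84} applied in $Q_{n-1}^{1}$ yields a Hamiltonian $z^{1},y$-path $P_1$ (the parities work out because crossing the matching edge $zz^{1}$ flips parity and $p(x)\neq p(y)$). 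The union $P_0+zz^{1}+P_1$ is then the required Hamiltonian $x,y$-path through $f$ in $Q_n$.

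In Case~B, both $x$ and $y$ sit in $Q_{n-1}^{0}$. If $f\in E(Q_{n-1}^{0})$, I apply the inductive hypothesis inside $Q_{n-1}^{0}$ to obtain a Hamiltonian $x,y$-path $H$ containing $f$, pick any edge $e'=ab$ on $H$ with $e'\neq f$, and reroute through $Q_{n-1}^{1}$ by replacing $e'$ with $aa^{1}$, a Hamiltonian $a^{1},b^{1}$-path in $Q_{n-1}^{1}$ (supplied by Theorem~\ref{h84}), and $b^{1}b$. If instead $f\in E(Q_{n-1}^{1})$, I pick an edge $e=ab\in E(Q_{n-1}^{0})$ with $\dim(e)\neq k$, which automatically forces $e\neq xy$ and $a^{1}b^{1}\neq f$; then one application of the inductive hypothesis in $Q_{n-1}^{0}$ forces $e$ into a Hamiltonian $x,y$-path $H$, a second application in $Q_{n-1}^{1}$ forces $f$ into a Hamiltonian $a^{1},b^{1}$-path $H'$, and splicing $H'$ into $H$ in place of $e$ yields the required path. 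The delicate step throughout is selecting the splitting dimension together with the auxiliary vertex or edge so that every inductive call simultaneously meets its parity requirement and the side condition that the prescribed edge is not the edge between the two endpoints; once this bookkeeping is set up, the concatenation and parity arguments are routine.
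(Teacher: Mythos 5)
The paper does not actually prove Lemma~\ref{a2}; it is imported verbatim from Dvo\v{r}\'ak's paper \cite{DT05} and used as a black box, so there is no internal proof to compare against. Your blind proof is a correct, self-contained replacement. The base case $n=2$ is right (the unique Hamiltonian path between adjacent vertices of the $4$-cycle uses all three edges other than $xy$), and the choice of splitting dimension $i\neq\dim(f)$ guarantees $f$ sits inside one subcube while forcing exactly the two configurations you analyze: $i\in\Delta(x,y)$ gives Case~A, and $\Delta(x,y)=\{\dim(f)\}$ gives Case~B. The parity bookkeeping checks out in each case --- in Case~A, $p(z^1)=p(x)\neq p(y)$ so Havel's theorem (Theorem~\ref{h84}) applies and $z^1\neq y$ is automatic; in Case~B with $f\in E(Q_{n-1}^1)$, taking $\dim(e)\neq\dim(f)$ cleanly ensures both side conditions $e\neq xy$ and $e_1\neq f$ for the two inductive calls. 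The counting claims ($2^{n-2}-1\geq 1$ choices of $z$, and $2^{n-1}-1\geq 3$ edges of $H$ from which to pick $e'\neq f$) are all valid for $n\geq 3$. This is essentially the standard inductive argument for prescribing one edge in a Hamiltonian path of the hypercube, and it would stand on its own if the authors wished to make the paper self-contained.
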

\begin{theorem}\textup{(\cite{DT05})}\label{dt}
Consider pairwise distinct vertices $u, v, x, y$ of $ Q_n$ where $n \geq 2$, with $p(v) \neq p(u)$ and $p(y) \neq p(x)$. Then, a spanning $2$-path $P_{uv} + P_{xy}$ exists in $Q_n $ furthermore, the path $P_{uv}$ can be chosen such that $P_{uv} = uv$ if $d(u, v) = 1$.
\end{theorem}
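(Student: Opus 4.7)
The plan is to prove Theorem~\ref{dt} by induction on $n$. For the base case $n=2$, $Q_2$ is a $4$-cycle, so four pairwise distinct vertices $u,v,x,y$ exhaust $V(Q_2)$; the parity constraints $p(u)\neq p(v)$ and $p(x)\neq p(y)$ leave only a handful of configurations, each of which I would verify by direct inspection of the $4$-cycle. In particular, when $d(u,v)=1$ the two remaining vertices $x,y$ automatically form the opposite edge of the $4$-cycle, so $P_{uv}=uv$ together with $P_{xy}=xy$ is a valid spanning $2$-path.

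For the inductive step I would split $Q_n$ along some dimension $j$ into subcubes $Q_{n-1}^0$ and $Q_{n-1}^1$, chosen so that the edge $uv$ is preserved on a single side whenever $d(u,v)=1$ (pick $j\neq\dim(uv)$); Lemma~\ref{t07} gives extra flexibility when two disjoint edges must be separated for a later splicing step. Let $\alpha_\theta=|\{u,v,x,y\}\cap V(Q_{n-1}^\theta)|$. Up to swapping the subcubes and relabeling the pairs $(u,v)$ and $(x,y)$, the possibilities are $(\alpha_0,\alpha_1)\in\{(4,0),(3,1),(2,2)\}$, and within $(2,2)$ one distinguishes the aligned split $\{u,v\}\mid\{x,y\}$ from the crossed split $\{u,x\}\mid\{v,y\}$.

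I would handle $(4,0)$ by applying induction inside $Q_{n-1}^0$ to obtain a spanning $2$-path $P_{uv}^0+P_{xy}^0$, then picking an edge $ab$ on one of these paths (avoiding the literal edge $uv$ when the edge constraint is active), deleting it, and splicing in a Hamiltonian path of $Q_{n-1}^1$ from $a_1$ to $b_1$ obtained from Theorem~\ref{h84}; the parities $p(a_1)\neq p(b_1)$ are automatic since $a,b$ are adjacent. The aligned $(2,2)$ case is immediate from two applications of Theorem~\ref{h84}, one per subcube; when $P_{uv}=uv$ is required, the residual vertices of $Q_{n-1}^0-\{u,v\}$ must be absorbed into $P_{xy}$, which I would accomplish using Lemma~\ref{a2} to thread a Hamiltonian path of $Q_{n-1}^1$ through a chosen cross-incident edge, then detouring across to cover the leftover side. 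The crossed $(2,2)$ and $(3,1)$ patterns are assembled similarly by fixing bridging vertices $a\in Q_{n-1}^0$ and $b\in Q_{n-1}^1$ of the correct parity and outside $\{u,v,x,y\}$, and then using Theorem~\ref{h84} to build Hamiltonian paths on each side that join through the cross-edges $aa_1$ and $bb_1$.

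The main obstacle will be parity and distinctness bookkeeping in the crossed $(2,2)$ and $(3,1)$ cases combined with the edge constraint: the bridging vertex choices must respect several parity inequalities so that Theorem~\ref{h84} applies on each side, must avoid the four distinguished vertices, and must not conflict with the literal edge $uv$ when $d(u,v)=1$. For $n\geq 3$, each parity class of $Q_{n-1}^\theta$ contains enough vertices that such a choice is always possible, but verifying this in every sub-case requires a short, direct argument rather than a single uniform step. Once consistent bridging vertices have been fixed, combining the various Hamiltonian paths supplied by Theorem~\ref{h84}, Lemma~\ref{a2}, and the induction hypothesis into the required spanning $2$-path of $Q_n$ is routine.
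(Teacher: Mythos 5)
The paper does not actually prove Theorem~\ref{dt}: it is quoted from \cite{DT05}, where it follows from a more general result on Hamiltonian cycles with prescribed edges. So there is no in-paper proof to compare against, and your proposal has to stand on its own. The inductive skeleton you describe --- split along a dimension $j\neq\dim(uv)$, case analysis on how $\{u,v,x,y\}$ distributes over the two subcubes, bridge through cross-edges chosen with the right parities --- is the standard route for statements of this kind and is essentially sound for the unconstrained part of the theorem.

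The concrete gap is the aligned $(2,2)$ case with the edge constraint active: $u,v\in V(Q_{n-1}^0)$ with $d(u,v)=1$, $x,y\in V(Q_{n-1}^1)$, and $P_{uv}$ forced to be the single edge $uv$. Your mechanism (thread a Hamiltonian $x$--$y$ path of $Q_{n-1}^1$ through a prescribed edge $a_1b_1$ via Lemma~\ref{a2}, then ``detour across to cover the leftover side'') requires a Hamiltonian path of $Q_{n-1}^0-\{u,v\}$ between the two \emph{adjacent} vertices $a_0$ and $b_0$. That is a hyper-Hamiltonian-laceability statement of Lewinter--Widulski type (\cite{new15} in the bibliography); it is true, but it is not among the tools you invoke and is not obviously weaker than what you are proving, so as written this step does not close. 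A repair inside your toolkit: take a Hamiltonian $u$--$v$ path of $Q_{n-1}^0$ from Theorem~\ref{h84}, strip its endpoints to get a Hamiltonian path $w_1\cdots w_k$ of $Q_{n-1}^0-\{u,v\}$ with $p(w_1)\neq p(w_k)$, and join $w_1^1,w_k^1$ to $x,y$ by a spanning $2$-path of $Q_{n-1}^1$ supplied by the induction hypothesis; but this still forces you to treat the degeneracies $\{w_1^1,w_k^1\}\cap\{x,y\}\neq\emptyset$, and the step from $n=2$ to $n=3$ undercuts your blanket claim that ``each parity class of $Q_{n-1}^\theta$ contains enough vertices,'' since $Q_2$ has only two vertices per colour class and the bridging vertex may be forced onto $u$, $v$, or a preimage of $x$ or $y$. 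Until those sub-cases are written out, the proposal is an outline with holes rather than a proof.
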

\begin{lemma}\textup{(\cite{Dt09})}\label{k2}
Consider \( x, y \in V(Q_n) \)  with \( p(y) \neq p(x) \) and \( f \in E(Q_n) \) where \( n \geq 3 \). Then,  \( Q_n- f \) containing a Hamiltonian path connecting \( y \) and \( x \).
\end{lemma}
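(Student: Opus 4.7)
The plan is to prove the lemma by induction on $n$, combining a dimension-splitting argument with Havel's theorem (Theorem \ref{h84}) as the engine that produces fault-free Hamiltonian paths inside the subcubes. For the base case $n = 3$, the hypercube $Q_3$ has only $8$ vertices and $12$ edges, and the vertex- and edge-transitive action of its automorphism group lets one fix the faulty edge $f$ up to isomorphism. The claim then reduces to a short finite check over the admissible pairs $\{x, y\}$ with $p(x) \neq p(y)$, each of which admits an explicit Hamiltonian $x$-$y$ path in $Q_3 - f$.

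For the inductive step, I assume $n \geq 4$ and the lemma for $Q_{n-1}$. Since $f$ lies in exactly one dimension while $n \geq 4$ dimensions are available, I pick $j \in [n]$ with $f \notin E_j$ and split $Q_n$ along $E_j$ into subcubes $Q_{n-1}^0$ and $Q_{n-1}^1$, assuming without loss of generality $f \in E(Q_{n-1}^0)$.

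Suppose first that $x$ and $y$ lie in different subcubes; say $x \in V(Q_{n-1}^0)$ and $y \in V(Q_{n-1}^1)$. I select an edge $uv \in E_j$ with $u \in V(Q_{n-1}^0)$, $v \in V(Q_{n-1}^1)$, $p(u) \neq p(x)$, $u \neq x$, and $v \neq y$. There are $2^{n-2}$ edges of $E_j$ meeting the parity requirement, and discarding the at most two such edges incident to $x$ or $y$ leaves positive slack when $n \geq 4$. The inductive hypothesis supplies a Hamiltonian $x$-$u$ path in $Q_{n-1}^0 - f$, Theorem \ref{h84} supplies a Hamiltonian $v$-$y$ path in $Q_{n-1}^1$ (the parity condition $p(v) \neq p(y)$ follows from $p(u) \neq p(x)$ together with $p(x) \neq p(y)$), and concatenation through the edge $uv$ gives the required path.

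Suppose instead that $x$ and $y$ lie in the same subcube. I first obtain a Hamiltonian $x$-$y$ path $P$ in that subcube: by the inductive hypothesis if it is $Q_{n-1}^0$ (which contains $f$), and by Havel's theorem if it is $Q_{n-1}^1$. Then I pick any edge $uv$ of $P$; the mirror vertices of $u$ and $v$ in the opposite subcube are adjacent and so have opposite parities, so a Hamiltonian path between them is supplied again by Havel's theorem (if the opposite subcube is clean) or by the inductive hypothesis (if it contains $f$). Replacing $uv$ in $P$ with this detour through the mirror subcube yields the required Hamiltonian $x$-$y$ path in $Q_n - f$. The main obstacle is really the base case $n = 3$, where no splitting argument is available and one must argue by direct inspection; once $n \geq 4$, the induction is essentially bookkeeping around the choice of splitting dimension and the parity count for the interface edge.
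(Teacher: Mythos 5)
The paper does not prove this lemma at all: it is imported as a black box from the cited reference \cite{Dt09}, so there is no in-paper argument to compare against. Your induction is, however, a correct self-contained derivation. The dimension split with $f \notin E_j$, the parity bookkeeping for the interface edge in the ``different subcubes'' case, and the detour construction in the ``same subcube'' case all go through; the count $2^{n-2}-1>0$ for $n\ge 4$ is right, and the base case $n=3$ is a legitimate finite check (it also follows from the known fact that $Q_n$ is Hamiltonian laceable with up to $n-2$ faulty edges). Two points deserve to be made explicit rather than left implicit. First, in the subcase where $x,y$ lie in the fault-free subcube $Q_{n-1}^1$ and you detour through $Q_{n-1}^0$, the mirror pair $u_0,v_0$ of the chosen path edge may satisfy $u_0v_0=f$; your appeal to the inductive hypothesis still works only because the lemma as stated places no restriction $xy\neq f$, so the base case $n=3$ must verify the pair consisting of the two endpoints of $f$ as well (it does: a Hamiltonian $x$-$y$ path in $Q_3-xy$ is equivalent to a Hamiltonian cycle of $Q_3$ through $xy$, which exists). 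Second, the condition $u\neq x$ in the interface-edge selection is automatic from $p(u)\neq p(x)$, so only the single exclusion $v\neq y$ actually costs you anything. With those remarks the proof is complete and, arguably, more informative than the paper's bare citation.
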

\begin{theorem}\textup{(\cite{new14})}\label{TC}
Consider \( n \)-dimensional hypercube \( Q_n \) where \( n \geq 4 \), and let \( F \subseteq E(Q_n) \) with \( |F| \leq 2n - 7 \). For a balanced vertex set \( \{u, v, x, y\} \) in \( Q_n \), then a spanning 2-path \( P_{uv} + P_{xy} \) exists in \( Q_n - F \).
\end{theorem}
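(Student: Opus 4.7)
The plan is to prove Theorem~\ref{TC} by induction on $n$, the key device at every step being a judiciously chosen splitting dimension that partitions $Q_n$ into two $(n-1)$-subcubes while controlling both the number of faulty edges and the placement of the four designated vertices $u,v,x,y$.

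For the base case $n=4$ we have $|F|\leq 1$. If $F=\emptyset$, Theorem~\ref{dt} settles it. If $F=\{f\}$, take $j=\dim(f)$ so that splitting $Q_4$ along $E_j$ yields two fault-free copies of $Q_3$; if a second prescribed edge needs to sit in the opposite subcube we can instead use Lemma~\ref{t07}. A short case analysis on the distribution of $u,v,x,y$ across $Q_3^0 \cup Q_3^1$ then assembles the required $P_{uv}+P_{xy}$ by gluing either two Hamiltonian paths in the $Q_3$'s (Theorem~\ref{h84}) or a spanning $2$-path in one $Q_3$ (Theorem~\ref{dt}) together with a Hamiltonian path in the other, connected by a suitable edge of $E_j\setminus F$.

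For the inductive step, choose $j\in[n]$ minimising $|F\cap E_j|$; a counting argument gives $|F_c|:=|F\cap E_j|\leq \lfloor(2n-7)/n\rfloor$, which is $0$ for $n=5,6$ and at most $2$ for larger $n$. Writing $F_i=F\cap E(Q_{n-1,j}^{i})$, we have $|F_0|+|F_1|\leq 2n-7-|F_c|$. I would then case-analyse on the distribution of $\{u,v,x,y\}$ between $Q_{n-1}^0$ and $Q_{n-1}^1$: (a)~all four in one subcube; (b)~the pairs $\{u,v\}$ and $\{x,y\}$ split cleanly $2$-$2$ between the two subcubes; (c)~a mixed $2$-$2$ split in which each subcube sees one vertex from each pair; (d)~a $3$-$1$ split. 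In cases~(a) and~(b) one applies the induction hypothesis (or a Havel-type Hamiltonian laceability, e.g.\ Lemma~\ref{k2}) inside the subcube(s) already holding a balanced quadruple or an opposite-parity pair, and threads through the other subcube using a crossing edge of $E_j\setminus F_c$ together with a fault-tolerant Hamiltonian path. In case~(d), a single crossing edge suffices to bridge the spanning $2$-path produced by induction in the $3$-side with a Hamiltonian path in the $1$-side. When some $|F_i|$ exceeds $2(n-1)-7$ so that induction is unavailable, I would re-choose $j$ (using Lemma~\ref{t07} to force designated edges into different subcubes) so as to rebalance the fault counts.

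The main obstacle is the $2$-$2$ mixed case~(c), where no single subcube contains a whole pair. There one must pick two disjoint crossing edges $e_1,e_2\in E_j\setminus F_c$ together with two pairs of auxiliary endpoints whose parities are chosen so that each subcube inherits a balanced $4$-set that the induction hypothesis can handle. This is delicate because faulty edges and parity constraints can simultaneously block the most convenient crossing edges, and because one must also respect $|F_i|\leq 2(n-1)-7$ in both subcubes. I expect the argument here to hinge on an inventory of candidate crossing pairs and a secondary pigeonhole over the $n$ coordinate directions guaranteeing that at least one admissible choice survives, with Lemma~\ref{a2} invoked to force the auxiliary edges to lie in the Hamiltonian paths being constructed.
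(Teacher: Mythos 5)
The paper never proves Theorem~\ref{TC}: it is quoted from Chen~\cite{new14} and used as a black box, so there is no in-paper proof to compare your argument against. Judged on its own merits, your outline follows the standard split-and-induct template for such results, but it has gaps that are not merely matters of polish.

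The most concrete gap is the base case. ``Balanced'' here only means that $\{u,v,x,y\}$ contains two white and two black vertices; it does \emph{not} force $p(u)\neq p(v)$ and $p(x)\neq p(y)$. Your base case rests on Theorem~\ref{dt}, which requires both pairs to be bichromatic, so the configuration $p(u)=p(v)$, $p(x)=p(y)$ in $Q_4$ with one faulty edge is simply not covered. Theorem~\ref{a8} does not rescue it: for $n=4$, $k=2$ its hypothesis $4-|\{uv,xy\}\cap E(Q_4)|<4$ forces at least one pair to be an edge, and an edge is automatically bichromatic, so the monochromatic-pairs case is again excluded. That case needs a separate (essentially exhaustive) argument which your sketch does not supply, and the omission propagates upward: in your cases (a), (b) and (d) you repeatedly reduce to Theorem~\ref{dt}, Lemma~\ref{k2}, or ``a Hamiltonian path between an opposite-parity pair,'' all of which silently assume the bichromatic sub-case.

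Two further load-bearing steps are asserted rather than proved. First, fault rebalancing: minimizing $|F\cap E_j|$ only yields $|F_0|+|F_1|\leq 2n-7$, and when the faulty edges occupy pairwise distinct directions (possible whenever $|F|\leq n$, e.g.\ for $n=5,6,7$) every choice of $j$ can leave $|F_0|=2n-8>2(n-1)-7$, so induction is unavailable in that subcube; your proposed ``re-choose $j$'' has no guarantee of success, and the standard remedy (temporarily delete a fault from $F_0$, apply induction, then reroute around the possibly-used bad edge, as this paper itself does via Lemma~\ref{a2}-style arguments in Cases 2--4 of its main proof) is absent. Second, the mixed $2$--$2$ split, which you yourself identify as the main obstacle, is precisely where the parity bookkeeping for the two auxiliary crossing pairs, the avoidance of $F_c$, and the balancedness of the inherited quadruples in each subcube must be carried out explicitly; ``a secondary pigeonhole'' is a plan, not a proof. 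As it stands the proposal is a credible roadmap to Chen's theorem, not a proof of it.
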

\begin{theorem}\textup{(\cite{GP08})}\label{a8}
Consider a balanced pair set $\{(a_i, b_i)\}_{i=1}^k$  in \( Q_n \) where \( n \geq 1 \), with $2k - | \{a_i b_i\}_{i=1}^k \cap E(Q_n)|< n$. Then, a spanning $k$-path \( \sum_{i=1}^{k}P_{a_i b_i} \) exists of \( Q_n \).
\end{theorem}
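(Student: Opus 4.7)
The plan is to prove Theorem \ref{main} by induction on $n$. For the base case $n = 5$ the bound $|F| \leq 4n - 17 = 3$ is strictly below $3n - 11 = 4$, so the statement follows directly from Theorem \ref{n1}; the same theorem also covers $n = 6$, where $3n - 11 = 4n - 17 = 7$, so either $n=5$ alone or both $n=5$ and $n=6$ may be taken as base cases.

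For the inductive step, suppose the theorem holds for $Q_{n-1}$ with up to $4(n-1) - 17 = 4n - 21$ faulty edges, and let $n \geq 7$, $F \subseteq E(Q_n)$ with $|F| \leq 4n - 17$, and $x, y \in V(Q_n)$ with $p(x) \neq p(y)$. Write $w$ for the unique, if present, vertex of degree $2$ in $Q_n - F$. The main preparatory step is to select a splitting dimension $j \in [n]$. Setting $F_i^{(j)} = F \cap E(Q_{n-1,j}^i)$ and $F_c^{(j)} = F \cap E_j$, I want $j$ such that (a) $\max\{|F_0^{(j)}|, |F_1^{(j)}|\} \leq 4n - 21$, so that the induction hypothesis can be invoked on each subcube; (b) both $Q_{n-1,j}^0 - F_0^{(j)}$ and $Q_{n-1,j}^1 - F_1^{(j)}$ inherit the two degree conditions; and (c) $x$, $y$, $w$ sit in favorable positions relative to the splitting. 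Since $\sum_j |F_c^{(j)}| = |F|$, at least one dimension carries $\lceil |F|/n \rceil$ crossing faults, and this slack, combined with the fact that $w$ has exactly $n - 2$ faulty edges on it, will be used to satisfy (a)--(c) simultaneously.

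With $j$ fixed, I split into the standard cases. If $x \in V(Q_{n-1}^0)$ and $y \in V(Q_{n-1}^1)$, pick a non-faulty crossing edge $uu' \in E_j \setminus F$ with suitable parity, construct a Hamiltonian $x$-$u$ path in $Q_{n-1}^0 - F_0$ and a Hamiltonian $u'$-$y$ path in $Q_{n-1}^1 - F_1$ by the induction hypothesis (falling back to Theorem \ref{h84}, Lemma \ref{k2}, or Theorem \ref{n1} whenever the subcube has too few faults for induction to bite), and concatenate through $uu'$. If $x, y$ both lie in $V(Q_{n-1}^0)$, choose an auxiliary pair $(u, v) \subseteq V(Q_{n-1}^0)$ with parities arranged so that $uu', vv' \in E_j \setminus F$, apply Theorem \ref{TC} or Theorem \ref{a8} to extract a spanning $2$-path $P_{xu} + P_{vy}$ of $Q_{n-1}^0 - F_0$, apply the induction hypothesis to obtain a Hamiltonian $u'$-$v'$ path of $Q_{n-1}^1 - F_1$, and splice the three pieces through the two crossing edges. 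The case $x, y \in V(Q_{n-1}^1)$ is symmetric.

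The hard part will be the bookkeeping around the degree-$2$ vertex $w$ and around adversarial distributions of $F$. If $w \in \{x, y\}$ or $w$ is adjacent to $x$ or $y$, the choice of $(u, v)$ is very constrained, and one may need to guarantee that the crossing edge at $w$ is faulty, since otherwise $w$ would have degree $1$ in its subcube; this forces $j$ to lie among the $n - 2$ dimensions along which $w$'s incident edge is faulty. Likewise, if the faults concentrate so that one subcube holds more than $4n - 21$ edges of $F$, or if splitting creates a second degree-$2$ vertex in a subcube, the induction hypothesis fails and one must either re-route by choosing a different $j$, or peel off one problematic edge and fall back to Theorem \ref{n1}, Lemma \ref{a2}, or Theorem \ref{dt} on the affected subcube. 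Each such obstruction should be ``locally bad'' in the sense that only $O(n)$ dimensions can be forbidden for any single reason, so for $n$ large enough at least one good dimension survives; this is the step where the precise constant $4n - 17$ gets pinned down.
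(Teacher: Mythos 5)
Your proposal does not address the statement it was asked to prove. The target is Theorem~\ref{a8}, the path-partition result of Gregor and Dvo\v{r}\'{a}k: for a balanced family of $k$ terminal pairs $\{(a_i,b_i)\}_{i=1}^{k}$ in a \emph{fault-free} $Q_n$ satisfying $2k - |\{a_ib_i\}_{i=1}^{k} \cap E(Q_n)| < n$, there is a spanning $k$-path $\sum_{i=1}^{k}P_{a_ib_i}$. What you have written is instead an outline of a proof of Theorem~\ref{main} (Hamiltonian laceability of $Q_n - F$ with $|F|\leq 4n-17$): you open with the plan to prove Theorem~\ref{main} by induction on $n$, your base case appeals to Theorem~\ref{n1}, and your inductive step is organized around choosing a splitting dimension, tracking the degree-$2$ vertex $w$, and distributing faulty edges between subcubes. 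None of this engages with the content of Theorem~\ref{a8}: that statement involves no faulty edge set, no degree conditions, and no single terminal pair; the entire difficulty lies in routing $k$ prescribed pairs simultaneously and in exploiting the discount in the count $2k - |\{a_ib_i\}_{i=1}^{k}\cap E(Q_n)|$ for pairs that are already edges.

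Worse, your outline explicitly \emph{invokes} Theorem~\ref{a8} as a tool (``apply Theorem~\ref{TC} or Theorem~\ref{a8} to extract a spanning $2$-path''), so even if one tried to reinterpret the argument as bearing on Theorem~\ref{a8} itself, it would be circular. For the record, the paper does not prove this theorem either; it is imported from \cite{GP08}, where the proof proceeds by an induction on $n$ in which $Q_n$ is split into two subcubes so that the terminal pairs and the quantity $2k - |\{a_ib_i\}_{i=1}^{k}\cap E(Q_n)|$ distribute favorably between the halves --- a genuinely different induction from the fault-tolerance argument you have sketched. As written, the proposal establishes nothing about the statement in question.
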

\begin{lemma}\label{new4}
Consider \( f\) and \( uv \) are two disjoint edges in \( Q_n \) where \( n \geq 5 \). If \( x, y, w, z \) are distinct vertices of $Q_n$ with $ p(w) \neq p(z)$ and $p(x)\neq p(y)$. Then, a spanning 3-path $P_{uv}+P_{xy}+P_{wz}$ exists in \( Q_n - f \).
\end{lemma}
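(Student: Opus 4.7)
The strategy is to reduce the spanning 3-path problem in $Q_n - f$ to 2-path and Hamiltonian problems in two $(n-1)$-dimensional subcubes. By Lemma \ref{t07}, select a dimension $j \in [n]$ so that when $Q_n$ is split into $Q_{n-1,j}^0$ and $Q_{n-1,j}^1$ by removing $E_j$, the edge $f$ lies in $E(Q_{n-1}^0)$ and the edge $uv$ lies in $E(Q_{n-1}^1)$. Then $Q_{n-1}^1$ is fault-free and $Q_{n-1}^0 - f$ has exactly one deleted edge. Since $|F| = 1 \leq 2(n-1) - 7$ for $n \geq 5$, Theorem \ref{TC} is available inside $Q_{n-1}^0 - f$. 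The desired spanning 3-path is then built by combining paths in the two subcubes, joined by one or two cross-edges of $E_j$. The analysis splits into cases based on the distribution of $\{x, y, w, z\}$ between $V(Q_{n-1}^0)$ and $V(Q_{n-1}^1)$.

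The routine configurations can be handled directly by the results cited in Section \ref{sec2}. If all four of $x, y, w, z$ lie in $V(Q_{n-1}^0)$, then Theorem \ref{TC} produces $P_{xy} + P_{wz}$ in $Q_{n-1}^0 - f$ and Theorem \ref{h84} produces a Hamiltonian $P_{uv}$ in $Q_{n-1}^1$, yielding the 3-path immediately. If the split is 2--2 with the pairs preserved (say $\{x,y\} \subseteq V(Q_{n-1}^0)$ and $\{w,z\} \subseteq V(Q_{n-1}^1)$), apply Lemma \ref{k2} in $Q_{n-1}^0 - f$ and Theorem \ref{dt} in $Q_{n-1}^1$. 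For the configurations in which one or two pairs straddle $E_j$, introduce bridge vertices in the appropriate subcube. For instance, if only $x$ lies in $V(Q_{n-1}^0)$, pick $x_1 \in V(Q_{n-1}^1)$ with $p(x_1) = p(y)$ and $x_1^0 \notin \{y, w, z\}$ (feasible since there are $2^{n-2} \geq 8$ parity-$p(y)$ vertices in $Q_{n-1}^1$ for $n \geq 5$); then use Theorem \ref{dt} in $Q_{n-1}^1$ for $P_{uv} + P_{x,x_1}$, Theorem \ref{TC} in $Q_{n-1}^0 - f$ for $P_{x_1^0,y} + P_{wz}$, and splice through the cross-edge $x_1 x_1^0$ to form $P_{xy}$. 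The 2--2 cross-split and analogous 3--1 cases call for a spanning 3-path in $Q_{n-1}^1$, supplied by Theorem \ref{a8} after arranging for the bridge endpoints to be neighbours of the given endpoints, so that three pairs in the pair-set are edges and the hypothesis $6 - |\text{edges}| < n - 1$ is satisfied for $n \geq 5$.

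The delicate case is when all four of $x, y, w, z$ lie in $V(Q_{n-1}^1)$, leaving $Q_{n-1}^0$ with no endpoint of the target 3-path; one of the three output paths must detour through $Q_{n-1}^0 - f$ to cover it. The plan is first to build a spanning 3-path $P_{uv} + P_{xy} + P_{wz}$ inside the fault-free $Q_{n-1}^1$, then to pick an edge $ab$ appearing on one of the three subpaths and replace $ab$ with $a \to a^0 \to P' \to b^0 \to b$, where $P'$ is a Hamiltonian path in $Q_{n-1}^0 - f$ between the opposite-parity vertices $a^0$ and $b^0$ supplied by Lemma \ref{k2}. The main obstacle is producing the spanning 3-path in $Q_{n-1}^1$ with enough structure to provide a usable edge $ab$: Theorem \ref{a8} applies whenever $6 - |\text{edges}| < n - 1$, which holds for $n \geq 7$ and, for $n \in \{5,6\}$, whenever at least one of $(x,y), (w,z)$ is an edge. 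In the residual boundary cases (both $(x,y)$ and $(w,z)$ non-edges with $n \leq 6$), the flexibility of Lemma \ref{t07} can be exploited: since $d(x,y) \geq 2$ or $d(w,z) \geq 2$ yields additional separating dimensions, re-choosing $j$ pushes at least one of $x,y,w,z$ into $V(Q_{n-1}^0)$, reducing to one of the already-handled cases.
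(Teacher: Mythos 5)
Your decomposition is essentially the paper's: separate $f$ from $uv$ via Lemma \ref{t07}, case on how $\{x,y,w,z\}$ distributes over the two subcubes, build the pieces from Theorems \ref{h84}, \ref{dt}, \ref{TC}, \ref{a8} and Lemma \ref{k2}, and, when all four endpoints lie on the $uv$-side, detour one of the three paths through the faulty subcube by replacing an edge of it with a Hamiltonian path supplied by Lemma \ref{k2}. For $n\ge 7$ your argument goes through (modulo the evident label swap in your one-endpoint example: $x$ must lie on the $uv$-side for $P_{uv}+P_{xx_1}$ to sit in a single subcube). The genuine gap is your fallback for $n\in\{5,6\}$: ``re-choosing $j$'' is not available in general, because Lemma \ref{t07} guarantees only \emph{one} separating direction. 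For instance, with $f=\{00000,00001\}$ and $uv=\{00010,00011\}$ in $Q_5$ the unique separating coordinate is the fourth, and one can place all of $x,y,w,z$ on the $uv$-side with $d(x,y)=d(w,z)=3$; then no alternative split exists, and Theorem \ref{a8} is unusable since with only $uv$ an edge it requires $6-1<n-1$, i.e.\ $n\ge 7$. To your credit you at least detected this boundary problem: the paper's own proof applies Theorem \ref{a8} in exactly the same spots without verifying its hypothesis, so it carries the identical unrepaired defect for $n\in\{5,6\}$. The lemma is safe as actually used in Section \ref{sec3}, where it is invoked only for subcubes of dimension at least $6$ and with two of the three prescribed pairs being edges.
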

\begin{proof}
By Lemma \ref{t07}, we can split the hypercube \( Q_n \) at direction $j$ into two subcubes \( Q_{n-1}^\theta \) for every $\theta = \{0, 1\}$, such that \( uv \in E(Q_{n-1}^0) \) and \( f \in E(Q_{n-1}^1) \). We may assume \( x \in V(Q_{n-1}^0) \).  
	
If \( y \in V(Q_{n-1}^0) \), let  \( w, z \in V(Q_{n-1}^1) \), since \( p(v) \neq p(u) \) and \( p(y) \neq p(x) \), using Theorem \ref{dt}, a spanning 2-path \( P^0_{xy} + P_{uv}^0 \) exists in \( Q_{n-1}^0 \). So, by Lemma \ref{k2}, a Hamiltonian path \( P^1_{wz} \) exists in \( Q_{n-1}^1-f \). The required spanning 3-path is $P^0_{xy}+ P^0_{uv}+ P^1_{wz}$ in \( Q_n -f \).
	
If \( y \in V(Q_{n-1}^1) \) and $w \in V(Q_{n-1}^0)$, choose $s_0, r_0 \in V(Q_{n-1}^0)$ with \( p(s_0) \neq p(x) \) and \( p(r_0) \neq p(w) \) there exist $s_1, r_1\in V(Q_{n-1}^1)$ with $p(s_1)\neq p(y)$ and $p(r_1)\neq p(z)$. Utilizing Theorem \ref{a8}, a spanning 3-path \( P^0_{xs_0}+ P_{wr_0}^0+ P_{uv}^0 \) exists in \( Q_{n-1}^0 \). Moreover, a spanning 2-path \( P^1_{ys_1} + P^1_{zr_1} \) exists by Theorem \ref{TC} in \( Q_{n-1}^1-f \). Thus, the spanning 3-path in \( Q_n - f\) is $ P_{xy} = P^0_{xs_0}+\{s_0s_1\} + P^1_{ys_1}$, $P_{wz}=P^0_{wr_0}+\{r_0r_1\} + P^1_{zr_1}$, and $P^0_{uv}$.
	
Let consider the case when \( x,y, w,z\in V(Q_{n-1}^1) \). Since \( p(z) \neq p(w) \) and \( p(x) \neq p(y) \). A spanning 2-path \( P^1_{xy}+ P_{wz}^1 \) exists by Theorem \ref{TC}, in \( Q_{n-1}^1-f \), and using Theorem \ref{h84}, a Hamiltonian path \( P^0_{uv}\) exists in \( Q_{n-1}^0\). Thus, the spanning 3-path in \( Q_n - f\) is $P_{xy}^1+P_{wz}^1 +P^0_{uv}$. Assume \( x, y, w, z\in V(Q_{n-1}^0) \), using Theorem \ref{a8}, a spanning 3-path \( P^0_{xy}+ P_{uv}^0+ P_{wz}^0 \) exists in \( Q_{n-1}^0 \). If $P^0_{xy}\geq P^0_{uv}\geq P^0_{wz}$ there exists $s_0r_0 \in E(P_{xy}^0)$ such that $s_1, r_1 \in V(Q_{n-1}^1)$ with $p(r_1)\neq p(s_1)$. Using Lemma \ref{k2}, a Hamiltonian path \( P^1_{s_1r_1}\) exists in \( Q_{n-1}^1-f\). Thus, the required spanning 3-path in \( Q_n - f\) is $ P_{xy}^0 + P^1_{s_1r_1} +\{s_0r_1, s_0r_1\} - s_0r_0$, $P_{wz}^0$ and $P^0_{uv}$.
\end{proof}
Let us consider a vertex $u$, $d_G(u)$ represents the degree of a vertex $u$, and $\delta(G)$ is the minimum degree of $u$ in the graph $G$.
\begin{lemma}\label{new1}
Consider $F$ is a non-empty set of faulty edges in \( Q_n \) where \( n \geq 7 \), such that \( |F| \leq 4n - 17 \). Then in \( Q_{n-1,j}^{\theta} - F_{\theta,j} \), a direction \( j \in \{1, \dots, n\} \) exists with $\delta(Q_{n-1,j}^{\theta} - F_{\theta,j}) \geq 2$, for every \( \theta \in \{0, 1\} \), if \( \delta(Q_n - F) \geq 2 \), one vertex at most having degree 2 in \( Q_n - F \).
\end{lemma}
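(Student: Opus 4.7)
The plan is to reduce the lemma to a local, per-vertex degree check and then to a trivial combinatorial choice of the splitting direction. For each vertex $u \in V(Q_n)$ I would let $f(u) := n - d_{Q_n - F}(u)$ denote the number of faulty edges incident to $u$ in $Q_n$. The two hypotheses $\delta(Q_n - F) \geq 2$ and ``at most one vertex of degree exactly~$2$'' together imply $f(u) \leq n-2$ for every vertex, with equality possible for at most one vertex $v$; every other vertex satisfies $f(u) \leq n-3$.

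Next I would write out the post-split degree explicitly. For any direction $j \in [n]$ and any vertex $u \in V(Q_{n-1,j}^{\theta})$, the only neighbour of $u$ cut off by the split is its unique direction-$j$ neighbour $u^{1-\theta}$. Writing $\epsilon_j(u) \in \{0,1\}$ for the indicator that $u u^{1-\theta}$ is faulty, I obtain
\[
d_{Q_{n-1,j}^{\theta} - F_{\theta,j}}(u) \;=\; (n-1) - \bigl(f(u) - \epsilon_j(u)\bigr) \;=\; n-1-f(u)+\epsilon_j(u).
\]
Hence the target inequality $d_{Q_{n-1,j}^{\theta} - F_{\theta,j}}(u) \geq 2$ is equivalent to $f(u) \leq n-3+\epsilon_j(u)$, which is automatic for every vertex with $f(u) \leq n-3$. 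Only the possibly unique vertex $v$ with $f(v) = n-2$ can obstruct it, and only for directions $j$ satisfying $\epsilon_j(v)=0$.

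The case split is then immediate. If no degree-$2$ vertex exists in $Q_n-F$, any $j \in \{1,\ldots,n\}$ works. Otherwise, I would take $j$ to be the direction of any one of the faulty edges incident to $v$; because the $n$ edges at $v$ lie in $n$ distinct directions and $f(v) = n-2 \geq 5$ when $n \geq 7$, there are $n-2$ admissible choices. For any such $j$, $\epsilon_j(v)=1$ forces $d_{Q_{n-1,j}^{\theta}-F_{\theta,j}}(v) = 2$, while every other vertex $u$ meets the lower bound automatically. The only conceivable obstacle is that $v$ might have no incident faulty edge in some direction to align the split with, but this is ruled out by $f(v) = n-2 \geq 1$; in particular the global fault budget $|F| \leq 4n-17$ is not used in this lemma, since the obstruction is entirely local to at most one vertex.
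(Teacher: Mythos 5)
Your degree bookkeeping is correct as far as it goes: the identity $d_{Q_{n-1,j}^{\theta}-F_{\theta,j}}(u)=n-1-f(u)+\epsilon_j(u)$ and the reduction of $\delta\geq 2$ to $f(u)\leq n-3+\epsilon_j(u)$ are right, and choosing $j$ as the direction of a faulty edge at the unique possible degree-$2$ vertex does give $\delta(Q_{n-1,j}^{\theta}-F_{\theta,j})\geq 2$. The problem is that this is only the literal wording of the lemma, not its actual content. What the paper's proof establishes, and what the induction in Section \ref{sec3} consumes, is stronger: the chosen direction $j$ must also guarantee that \emph{at most one vertex of $Q_{n-1,j}^{\theta}-F_{\theta,j}$ has degree exactly $2$} for each $\theta\in\{0,1\}$, and additionally that $|E_j\cap F|\geq 1$ (Section \ref{sec3} explicitly invokes both facts). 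Your construction delivers neither. In your own notation, every vertex $u$ with $f(u)=n-3$ (i.e., degree $3$ in $Q_n-F$) and $\epsilon_j(u)=0$ lands at degree exactly $2$ in its subcube. There can be several such vertices --- the fault budget permits up to three when $\delta(Q_n-F)\geq 3$, and up to two besides the degree-$2$ vertex when $\delta(Q_n-F)=2$ --- so picking $j$ merely as some faulty direction at the degree-$2$ vertex can leave two or more degree-$2$ vertices in a single subcube, which breaks the induction hypothesis of Theorem \ref{main} at dimension $n-1$.

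This is also exactly where the bound $|F|\leq 4n-17$ enters, contrary to your closing remark that it is unused: it caps the number of degree-$3$ vertices of $Q_n-F$ at three (each such vertex carries $n-3$ incident faulty edges), and pigeonhole counts such as $2(n-3)>n$ and $(n-2)+(n-3)>n+1$ for $n\geq 7$ then produce a single direction $j$ whose layer simultaneously contains a faulty edge at each of the problematic low-degree vertices, so that all of them keep degree at least $3$ after the split (except the one allowed degree-$2$ vertex). The paper's case analysis (one, two, or three degree-$3$ vertices; $\delta=2$ versus $\delta\geq 3$) is doing precisely this work. To repair your argument you would need to add this counting step; the purely local observation about the single possible degree-$2$ vertex is not enough.
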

\begin{proof}
Consider any vertex $u \in V(Q^\theta_{n-1,i})$, where $\theta \in \{0, 1\}$ and $i \in \{1, \dots, n\}$. The degree of $u$ in $Q^\theta_{n-1,i} - F_{\theta, i}$ satisfies the inequality $d_{Q_n - F}(u) - 1 \leq d_{Q^\theta_{n-1,i} - F_{\theta, i}}(u)$.

Case 1. $\delta(Q_n - F) \geq 3$

Here, for each direction $i \in \{1, \dots, n\}$, we see that $\delta (Q_n - F)\geq 2$, there exist $\delta(Q^\theta_{n-1,i} - F_{\theta, i}) \geq 2$ for every $\theta = \{0, 1\}$. Thus, it suffices to find a $j$ direction in $Q^\theta_{n-1,j} - F_{\theta, j}$, at most one vertex has degree 2. Given that $|F| \leq 4n - 17 = 3(n - 3)+ (n - 8)$, the number of vertices with degree 3 in $Q_n - F$ is not greater than 3.  Assume the following three subcases.

Subcase 1.1.  One vertex at most in $Q_n-F$ with degree 3.

Choose arbitrary a $j$ direction where $|F \cap E_j|$ is as large as possible. This ensures that in $Q^\theta_{n-1,j} - F_{\theta, j}$, at most one vertex has degree 2 for each $\theta\in \{0, 1\}$. Thus, $j$ is the required direction. Moreover, $|E_j \cap F| \geq 1$, and also $|E_j \cap F| \geq 2$ if $|F| > n$.

Subcase 1.2. There are exactly two vertices in $Q_n-F$ with degree 3.

Consider in $Q_n - F$, there are two vertices $v$ and $u$ with degree 3. Since, for $n \geq 7$ we have $n < 2(n - 3) $, a $j$ direction exists with $vv_j \in F$ and $uu_j \in F$. This ensures that $\delta(Q^\theta_{n-1,j} - F_{\theta, j}) \geq 3$ for every $\theta=\{0, 1\}$, making $j$ is required direction. Furthermore, if $vv_j \neq uu_j$, then $|E_j \cap F| \geq 2$, if $vv_j = uu_j$ (i.e., $v_j = u$), then $|E_j \cap F| \geq 1$ and $|F_{\theta, j}| \leq (4n-17) -(n-3) = 3n - 14$ for $\theta \in \{0, 1\}$.

Subcase 1.3. There are exactly three vertices in $Q_n-F$ with degree 3.

Since, $|F| \leq 4n - 17 = 3(n - 3)+ (n - 8)$, let $u, v$ and $w$ are the vertices having degree 3 in $Q_n - F$. Since, for $n \geq 7$ we have $3(n - 3) > n$. By symmetry, assume that $vw \in F$ and $uv \in F$. Choose direction $j$ corresponding to $uv$, splitting $Q_n$ at $j$ with $v \in V(Q^1_{n-1})$ and $u \in V(Q^0_{n-1})$ for each $\theta \in \{0, 1\}$, see Figure \ref{Fig.1}. Then $w$ has a degree of at least 2 in $Q^1_{n-1}$ while the remaining vertices have a degree of at least 3 within their respective subcubes. Thus, the chosen direction is $j$. Now, $|E_j \cap F| \geq 1$ and $|F_{\theta, j}| \leq 3n - 14$ foe each $\theta=\{0, 1\}$.
\begin{figure}[H]
	\centering
	\includegraphics[width=0.87\linewidth, height=0.26\textheight]{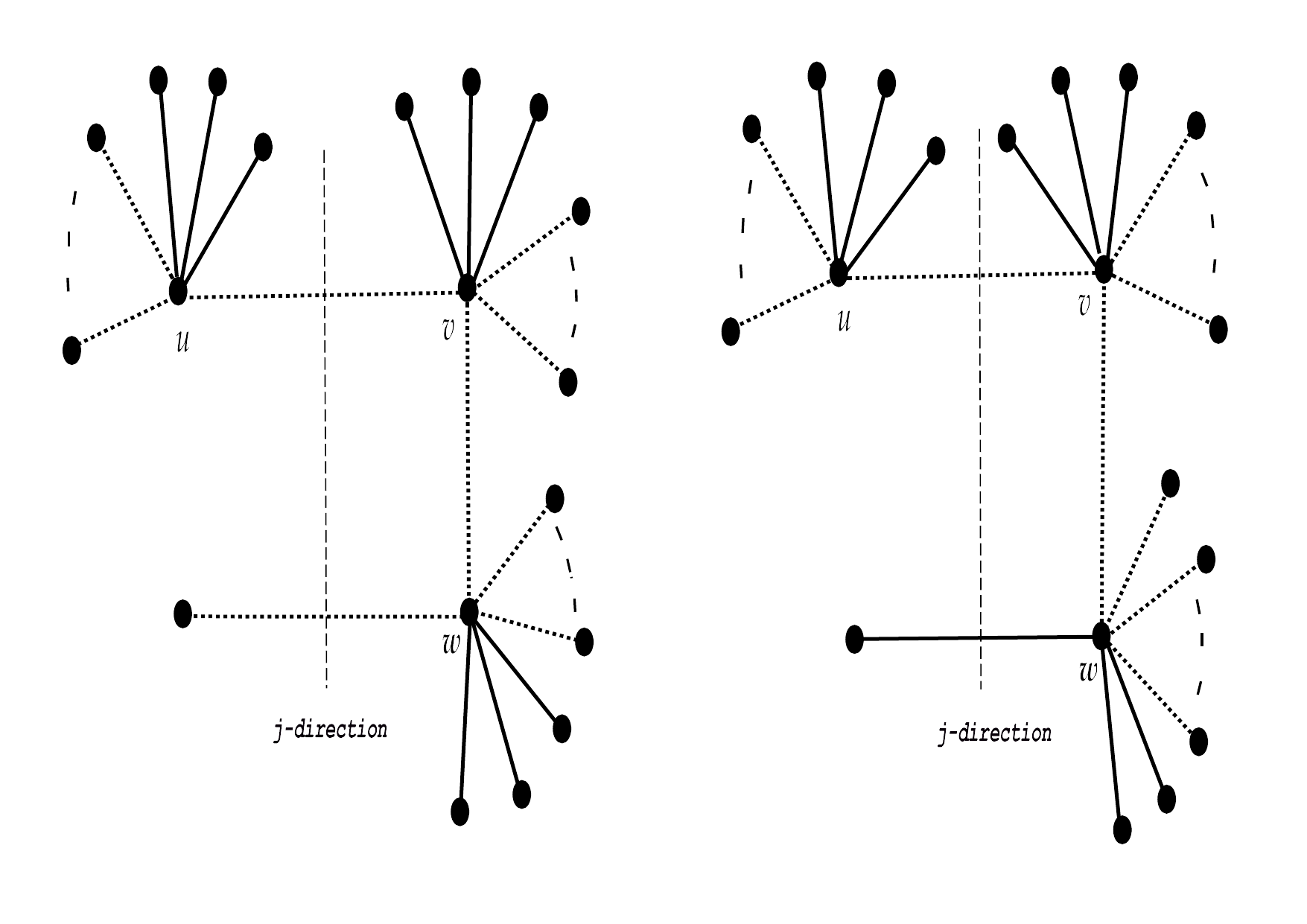}
	\caption[A]{Split $Q_n$ at $j$ direction.}\label{Fig.1}
\end{figure}

Case 2. $\delta(Q_n - F) = 2$.

Since there is exactly a single vertex $w$ in $Q_n - F$ with degree 2. Given $|F| \leq 4n - 17 < 2(n - 3) + (n - 2) + (n-8)$, at most two vertices in $Q_n - F$ has degree 3.

Subcase 2.1. Exactly single vertex in graph $Q_n-F$ with degree 3.

Consider $v$ as the only vertex in $Q_n - F$ containing degree 3. Since, for $n \geq 7$ we have $n + 1 < n - 2 + n - 3$, there exists a $j$ direction with $vv_j \in F$ and $ww_j \in F$, ensuring $vv_j \neq ww_j$. Therefore, $w$ is the only vertex with degree 2 in its respective subcube, while all others have a minimum degree of 3. Thus, $j$ is the required direction. Now, $|E_j \cap F| \geq 2$.

Subcase 2.2. there are exactly two vertices with degree 3 in $Q_n-F$.

Let $u$ and $v$ be the two distinct vertices in $Q_n - F$ has degree 3. Since, for $n \geq 7$ we have $n + 1 < n - 2 + n - 3$, the vertices $v$ and $w$ must be adjacent. There exists a $j$ direction with $wv\in  F$. Therefore, $w$ is the only vertex with degree 2 in its respective faulty subcube, while all others have a minimum degree of 3, shown in Figure \ref{Fig.2}. Thus, $j$ the required direction is the same direction of the line $wv$ with $|E_j \cap F| \geq 1$ and $|F_{\theta, j}| \leq (4n-17)-(n-3) = 3n - 14$ for $\theta \in \{0, 1\}$, there exists at least on vertex at least degree 2 in $Q^\theta_{n-1, i} - F_{\theta, i}$.
\begin{figure}[H]
	\centering
	\includegraphics[width=0.86\linewidth, height=0.25\textheight]{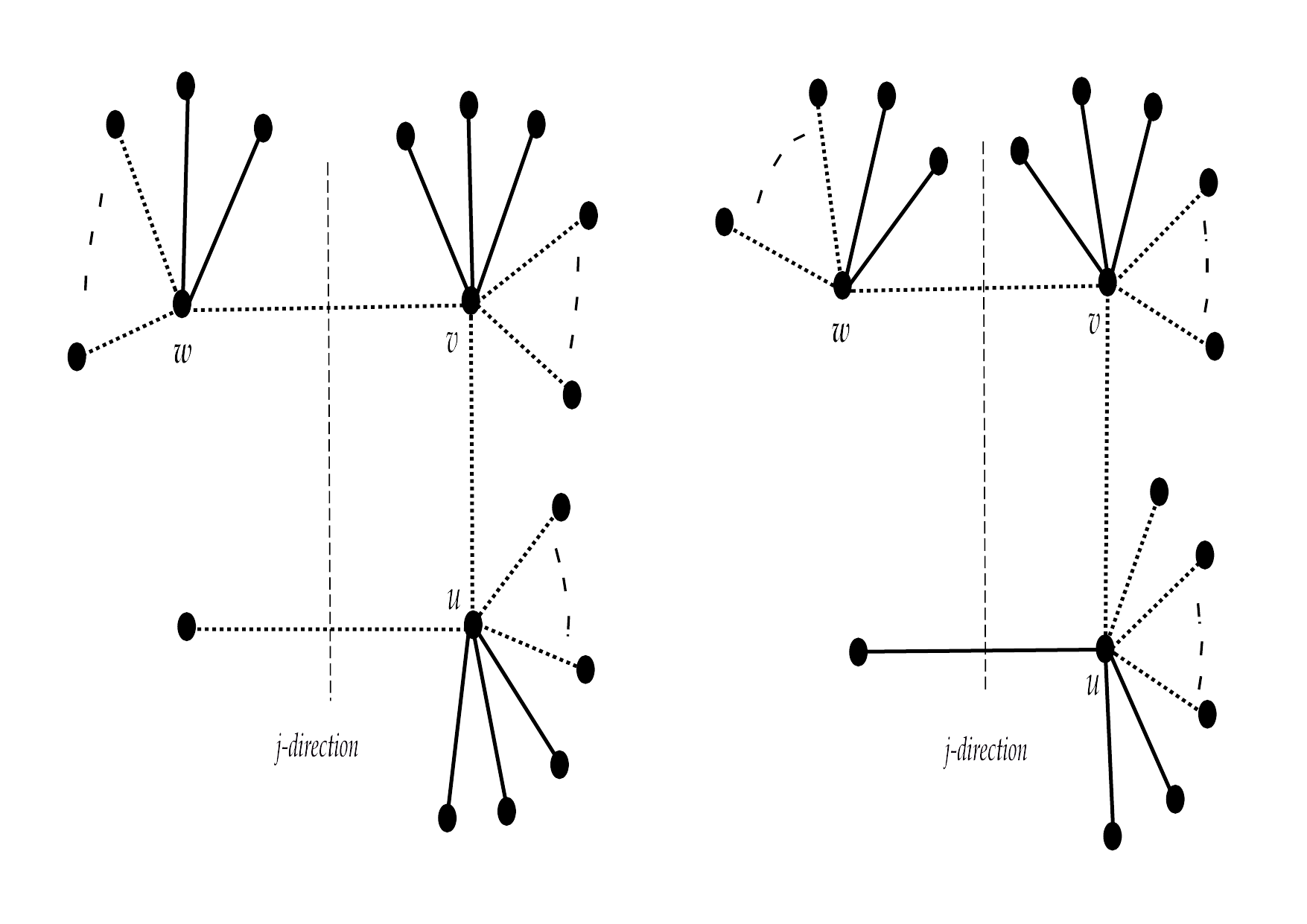}
	\caption[A]{Split $Q_n$ at $j$ direction.}\label{Fig.2}
\end{figure}

Subcase 2.3. In $Q_n-F$ all other vertices with degrees at least 4

Select a $j$ direction where $ww_j \in F$. Then, $w$ is the only vertex that contains degree 2 in their faulty subcube, while all other vertices maintain a degree of at least 3 in their respective subcubes. Confirms $j$ as the suitable direction with $|F \cap E_j| \geq 1$.
\end{proof}
\section{Proof of Theorem \ref{main}}\label{sec3}
If \( F = \emptyset \), the result pursues directly from Theorem \ref{h84}. For the remaining discussion, we consider the case when \( F \neq \emptyset \).  By induction on \( n \), we need to prove the given result. Since \( 4n - 17 \leq 3n - 11 \) for \( n \leq 6 \), by Theorem \ref{n1} result holds. Now, consider the theorem holds for \( n - 1 \) where \( n - 1 \geq 6 \). For \( n \geq 7 \), to show that the result holds. 

Since, \(\delta( Q_n - F)\geq 2 \) and one vertex at most in \( Q_n - F \) having degree 2. There is direction \( j \) according to Lemma \ref{new1}, with \( \delta(Q^{\theta}_{n-1,j} - F_{\theta,j})\geq 2\), and one vertex at most having degree 2 in $Q^{\theta}_{n-1, j} - F_{\theta, j}$ for each $\theta \in \{0, 1\}$. For simplicity, assume $j=c$ define subcubes $Q^{\theta}_{{n-1}, c}$ and faulty edges \( F_{\theta, c} \) for each \( \theta \in \{0, 1\} \), also \( F_c = F \cap E_c \). Using Lemma \ref{new1}, it follows that \( |F_c| \geq 1 \). By symmetry, assume \( |F_0| \geq |F_1| \) which implies \( |F_1| \leq |F_0| \leq 4n - 20 \).

Now, consider \( x \) and \( y \) where \( p(x) \neq p(y) \) are any two vertices in \( Q_n \) . We are to show that a Hamiltonian path \( P_{xy} \) exists in \( Q_n - F \). To proceed, we examine four distinct cases.

Case 1. $|F_0| \leq 4n - 21$. Now, \( |F_1| \leq |F_0| \leq 4(n-1) - 17 \).  

Subcase 1.1. \( x, y \in V(Q_{n-1}^0) \) or \( (x, y \in V(Q_{n-1}^1)) \)

By using induction, a Hamiltonian path \( P^0_{xy} \) exists in \( Q_{n-1}^0 - F_0 \). Consider an edge \( u_0v_0 \) in \( P^0_{xy} \) such that  \( \{u_0u_1, v_0v_1\} \cap F_c =\emptyset \). Since, for $n \geq 7$ we have $|E(P^0_{xy})| - 2|F_c| \geq 2^{n-1} - 1 - 2(4n - 17) > 1$, this is possible all the time. Similarly, using induction hypothesis on \( Q_{n-1}^1 - F_1 \), a Hamiltonian path \( P^1_{u_1 v_1} \) exists. Thus, the required Hamiltonian path in \( Q_n - F \) is $P_{xy} = P^0_{xy} + P^1_{u_1v_1} + \{u_0u_1, v_0v_1\} - u_0v_0$.

Subcase 1.2. \( x \in V(Q_{n-1}^0)\) and \( y \in V(Q_{n-1}^1)  \) or (\( y \in V(Q_{n-1}^0)\) and \( x \in V(Q_{n-1}^1)\)).

Select a vertex \( r_0 \in V(Q_{n-1}^0) \) with \( p(r_0) \neq p(x) \) and \( r_0r_1 \notin F_c \). Since, for $n \geq 7$ we have  $2^{n-2} > |F_c|$, this is possible all the time. Furthermore, it holds that \( p(y) \neq p(r_1) \). Using induction, the Hamiltonian paths \( P^0_{xr_0} \) exist in \( Q_{n-1}^0 - F_0 \) and \( P_{r_1y}^1 \) in \( Q_{n-1}^1 - F_1 \). Thus, the required Hamiltonian path in \( Q_n - F \) is  $P_{xy} = P^0_{xr_0} + P_{r_1y}^1 + r_0r_1$.
	
Case 2. $|F_0| = 4n - 20$.

Given that the edges satisfies \( 1 \leq |F_c| \leq 3 \) and \( |F_1| \leq 2 \). We can observe in the following cases when the number of faulty edges within the subcubes decreases, the degrees of the vertices in those faulty subcubes increase. Therefore, for any subset \( R \subseteq F \), the minimum degree in the graph \( Q_{n-1}^{\theta} - (F_{\theta} \setminus R) \) satisfies $\delta\left(Q_{n-1}^{\theta} - (F_{\theta} \setminus R)\right) \geq 2$. Furthermore, for every  \( \theta \in \{0, 1\} \), there is at most one vertex in \( Q_{n-1}^{\theta} - (F_{\theta} \setminus R) \) having degree exactly 2.

Below, we say that $f$ is an adjacent to $R$, a set of edges if $V(f) \cap V(R) \neq \emptyset$. Moreover, $f$ is incident to $S$, a set of vertices if $V(f) \cap S \neq \emptyset$.

Subcase 2.1. If $y, x \in V(Q_{n-1}^0)$.

Subcase 2.1.1. If an edge $f \in F_0$ exists which is not adjacent to $F_c$.

Since $|F_0| = 4n - 20$ for $n \geq 7$, there must be an edge $f \in F_0$ which is not adjacent to $F_c$. Given that $|F_0 \setminus \{f\}| = 4n - 21 = 4(n - 1) - 17$, applying induction hypothesis, a Hamiltonian path $P^0_{xy}$ exists in $Q_{n-1}^0 - (F_0 \setminus \{f\})$. We can select an edge from the path $P^0_{xy}$, based on the rules listed below

(i) If $P^0_{xy}$ contains $f$, then $f$ is chosen.

(ii) Otherwise, we select an edge not adjacent to $F_c$ from $P^0_{xy}$. Since $|E(P^0_{xy})| = 2^{n-1} - 1 > 2|F_c|$, this is possible all the time. 
	
Let $u_0v_0$ denote the chosen edge. Now, $\{u_0u_1, v_0v_1\} \cap F_c = \emptyset$. Since $|F_1| \leq 2$, using Theorem \ref{n1}, a Hamiltonian path $P^1_{u_1v_1}$ exists in $Q_{n-1}^1 - F_1$. Thus, the required Hamiltonian path in $Q_n - F$ is  $P_{xy} = P^0_{xy} + P^1_{u_1v_1} + \{u_0u_1, v_0v_1\} - u_0v_0$.

Subcase 2.1.2.  All edges in $F_0$ adjacent to $F_c$.

Since $|F_0| = 4n - 20 <  (n - 4) + (n - 3) + (n - 4)$ for $n \geq 7$, therefore all edges of $F_0$ are adjacent to $F_c$. Given that $|F_0 \setminus \{f\}| = 4n - 21 = 4(n - 1) - 17$, applying induction hypothesis, a Hamiltonian path $P^0_{xy}$ exists in $Q_{n-1}^0 - (F_0 \setminus \{f\})$. 

Let $u_0v_0$ denote the chosen edge. Now, $\{u_0u_1, v_0v_1\} \cap F_c \neq \emptyset$, let $u_0u_1 \in F_c$. There exists a neighbor $u'_0$ of $u_0$ with $u_0u'_0 \notin F_0$, that is adjacent to $v'_0$, on other side of $P^0_{xy}$ such that $p(v_1)\neq p(v'_1)$. Since $F_1=\emptyset$, by Theorem \ref{h84}, a Hamiltonian path $P^1_{v_1v'_1}$ exists in $Q_{n-1}^1$. Hence, the required Hamiltonian path in $Q_n - F$ is  $P_{xy} = P^0_{xy} + P^1_{v_1v'_1} + \{v_0v_1, v'_0v'_1, u_0u'_0\} - \{u_0v_0, u'_0v'_0\}$.

Subcase 2.2. $y \in V(Q_{n-1}^1)$ and $x \in V(Q_{n-1}^0)$.
  
Subcase 2.2.1. The edge $u_0v_0 \in F_0$ exists not adjacent to $F_c$, i.e., $\{u_0u_1, v_0v_1\} \cap F_c  = \emptyset$.

Choose $r_0 \in V(Q_{n-1}^0) \setminus \{u_0, v_0\}$ with $p(r_0) \neq p(x)$ and $r_0r_1 \notin F_c$. Since $2^{n-2} - 1 > |F_c|$, such an $r_0$ always exists. Furthermore, $p(y) \neq p(r_1)$, and the vertices $r_1, y, u_1, v_1$ are distinct. Since $|F_0 \setminus \{u_0v_0\}| = 4(n - 1) - 17$ by induction a Hamiltonian path $P^0_{xr_0}$ exists in $Q_{n-1}^0 - (F_0 \setminus \{u_0v_0\})$. So, $|F_1| \leq 2$, if $u_0v_0 \notin E(P^0_{xr_0})$, using Theorem \ref{n1}, a Hamiltonian path $P^1_{r_1y}$ exists in $Q_{n-1}^1 - F_1$. Thus,  the required Hamiltonian path is $P_{xy} = P^0_{xr_0} + r_0r_1 + P^1_{r_1y}$ in $Q_n-F$. Otherwise, if $u_0v_0 \in E(P^0_{xr_0})$, a spanning 2-path $P^1_{u_1v_1} + P^1_{r_1y}$ exists by Theorem \ref{TC} in $Q_{n-1}^1 - F_1$. The required Hamiltonian path is $P_{xy} = P^0_{xr_0} + P^1_{u_1v_1} + P^1_{r_1y} + \{u_0u_1, v_0v_1, r_0r_1\} - u_0v_0$, see Figure \ref{Fig.3}(a).

Subcase 2.2.2. There exists $u_0v_0 \in F_0$ adjacent to $F_c$, i.e., $\{u_0u_1, v_0v_1\} \cap F_c  \neq \emptyset$.

If $|F_c| = 3$ implying $F_1 = \emptyset$. Since $|F_0| = 4n - 20 < (n - 4) + (n - 3) +  (n - 4)$, all edges in $F_0$ are adjacent to $F_c$. Let $u_0v_0\in F_0$ and choose $r_0\in V(Q_{n-1}^0)$ such that $p(x)\neq p(r_0)$. By induction a Hamiltonian path $P^0_{xr_0}$ exists in $Q_{n-1}^0 - (F_0 \setminus \{u_0v_0\})$. If $u_0v_0 \notin E(P^0_{xr_0})$, using Theorem \ref{h84}, a Hamiltonian path $P^1_{r_1y}$ exists in $Q_{n-1}^1$, the Hamiltonian path is $P_{xy} = P^0_{xr_0} + r_0r_1 + P^1_{r_1y}$. Otherwise, if $u_0v_0 \in E(P^0_{xr_0})$, let $u_0u_1\in F_c$, we select a neighbor $u'_0$ of $u_0$ with $u_0u'_0 \notin F_0$ and $v'_0$ is adjacent to  $u'_0$ on other side of $P^0_{xr_0}$ such that $p(v_0)\neq p(v'_0)$. Since $r_1, y, v_1, v'_1$ are distinct vertices, using Theorem \ref{dt}, a spanning 2-path $P^1_{v_1v'_1} + P^1_{yr_1}$ exists in $Q_{n-1}^1$. Hence, the required Hamiltonian path in $Q_n-F$ is $P_{xy} = P^0_{xr_0} + P^1_{v_1v'_1}+ P^1_{yr_1} + \{v_0v_1, v'_0v'_1, r_0r_1, u_0u'_0\} - \{u_0v_0, u'_0v'_0\}$, as shown in Figure \ref{Fig.3}(b). 
\begin{figure}[H]
	\centering
	\includegraphics[width=0.85\linewidth, height=0.26\textheight]{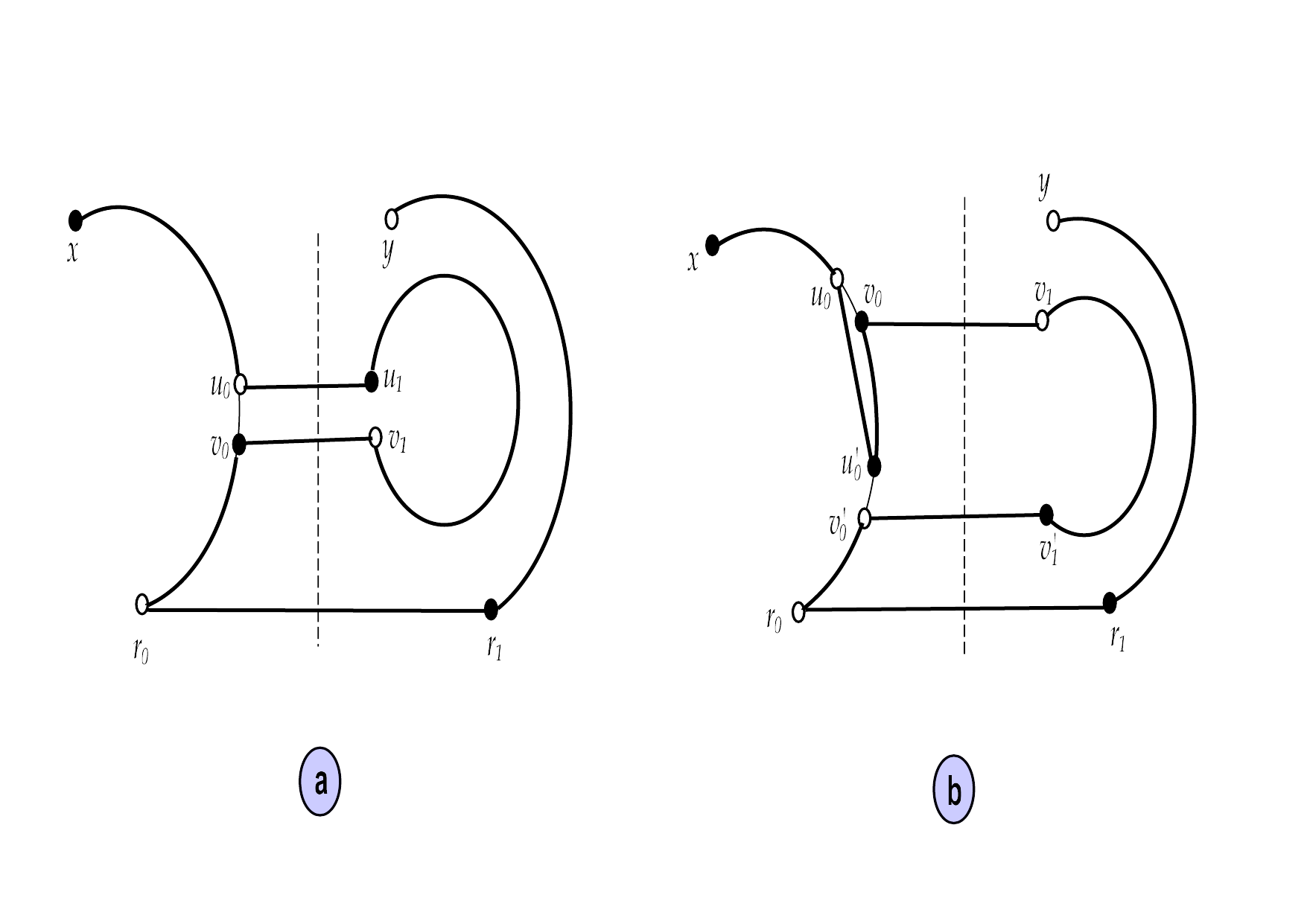}
	\caption[A]{Illustration of Subcase 2.2.}\label{Fig.3}
\end{figure}

Subcase 2.3. \( x, y \in V(Q_{n-1}^0) \).  

Given that \( |F_0| = 4n - 20  \), choose an edge $u_0v_0\in F_0$ such that $\{u_1, v_1\}\in V(Q_{n-1}^1\setminus\{x, y\})$ and $\{u_0u_1, v_0v_1\}\cap F_c = \emptyset$. Since $|F_0 \setminus \{u_0v_0\}| = 4(n - 1) - 17$, using induction hypothesis, a Hamiltonian path \( P_{u_0v_0}^0 \) exists in \( Q_{n-1}^0 - (F_0\setminus u_0v_0) \). Since \( p(u_1)\neq p(x) \) and $ p(v_1)\neq p(y)$ with \( |F_1| \leq 2 \), by Theorem \ref{TC} or Theorem \ref{dt} (when $F_1=\emptyset$), a spanning 2-path \( P^1_{xu_1} + P^1_{yv_1}\) exists in \( Q_{n-1}^1 - F_1 \). Thus, the required Hamiltonian path in \( Q_n - F \) is  $P_{xy} = P^0_{u_0v_0} + \{u_0u_1, v_0v_1\}+ P^1_{xu_1}+  P^1_{yv_1}$. 

If $|F_c| = 3$ implying $F_1 = \emptyset$, choose an edge $u_0v_0\in F_0$ such that $\{u_1, v_1\}\in V(Q_{n-1}^1\setminus\{x, y\})$ with $\{u_0u_1, v_0v_1\}\cap F_c \neq \emptyset$, let $u_0u_1\in F_c$, we choose a neighbor $u'_0$ of $u_0$ with $u_0u'_0 \notin F_0$ and $v'_0$ is adjacent to  $u'_0$ on other side of $P^0_{u_0v_0}$ such that $p(v_0)\neq p(v'_0)$. Since $x, y, v_1, v'_1$ are distinct vertices, with $p(x)\neq p(v'_1)$ and $p(y)\neq p(v_1)$, a spanning 2-path $P^1_{xv'_1}+ P^1_{yv_1}$ exists by Theorem \ref{dt} in $Q_{n-1}^1-F_1$. Hence, the required Hamiltonian path in $Q_n-F$ is $P_{xy} = P^0_{u_0v_0} + P^1_{xv'_1}+ P^1_{yv_1} + \{v_0v_1, v'_0v'_1, u_0u'_0\} -  u'_0v'_0$.

Case 3: $|F_0| = 4n - 19$.

Given that the faulty edges satisfies \( 1 \leq |F_c| \leq 2 \) and \( |F_1| \leq 1 \). We can observe in the subsequent cases when the number of faulty edges within the subcubes decreases, the degrees of the vertices in those faulty subcubes increase. Thus, for any subset \( R \subseteq F \), the minimum degree in the graph \( Q_{n-1}^{\theta} - (F_{\theta} \setminus R) \) satisfies $\delta\left(Q_{n-1}^{\theta} - (F_{\theta} \setminus R)\right) \geq 2$. Furthermore, for each \( \theta \in \{0, 1\} \), there is at most one vertex in \( Q_{n-1}^{\theta} - (F_{\theta} \setminus R) \) with degree exactly 2.

We define $f$ and $e$ as adjacent edges to $R$, a set of edges if $V(\{f, e\}) \cap V(R) \neq \emptyset$. Moreover, edges $f$ and $e$ are incident to a set of vertices $S$, if $V(e) \cap S \neq \emptyset$ and $V(f) \cap S \neq \emptyset$.

Subcase 3.1. If $y, x \in V(Q_{n-1}^0)$.

Subcase 3.1.1. Edges of $F_0$ that are not adjacent to $F_c$.

Since $|F_0| = 4n - 19$ for $n \geq 7$, the disjoint edges $f, e \in F_0$ exists that is not adjacent to $F_c$. Given that $|F_0 \setminus \{f, e\}| = 4n - 21 = 4(n - 1) - 17$, applying induction hypothesis, a Hamiltonian path $P^0_{xy}$ exists in $Q_0 - (F_0 \setminus \{f, e\})$. We can select edges from the path $P^0_{xy}$ based on the rules listed below.

(i) If $P^0_{xy}$ containing $f$ and $e$, then $f, e$ are chosen.

(ii) If $P_{xy}^0$ path passes through only one edge of edges $f$ and $e$, then $f$ is chosen. Next, we choose an edge of $P_{xy}^0$ that is neither adjacent to $f$ nor incident with $F_c$. Since $|E(P^0_{xy})| = 2^{n-1} - 1 > 3 + 2$, this is always all the time.

(iii) Otherwise, we select edges from $P^0_{xy}$ that are not adjacent to $F_c$. Since $|E(P^0_{xy})| > 4$, this is possible all the time. 

Let $u_0v_0$ and $w_0z_0$ denote the chosen edges. Now, $\{u_0u_1, v_0v_1, w_0w_1, z_0z_1\} \cap F_c = \emptyset$ and $|F_1| \leq 1$. Since $n-1\geq 6$, a spanning 2-path $P^1_{u_1v_1} + P^1_{w_1z_1}$ exists by Theorem \ref{TC} in $Q_{n-1}^1 - F_1$. Thus, the required Hamiltonian path in $Q_n - F$ is $P_{xy} = P^0_{xy} + P^1_{u_1v_1}  + P^1_{w_1z_1} + \{u_0u_1, v_0v_1, w_0w_1, z_0z_1\} - \{u_0v_0, w_0z_0\}$, see in Figure \ref{Fig.7}(a).

Subcase 3.1.2. Edges of $F_0$ that are adjacent to $F_c$.

If $|F_c| = 2$ implying $F_1 = \emptyset$, since for $n \geq 7$, we have $|F_0| = 4n - 19 > (n - 4) + (n - 3)$, there must be an edge not adjacent to $F_c$. Let $f \in F_0$ not adjacent to $F_c$ and $e \in F_0$ that is adjacent to $F_c$. Given that $|F_0 \setminus \{f, e\}| = 4n - 21 = 4(n - 1) - 17$, applying induction hypothesis, a Hamiltonian path $P^0_{xy}$ exists in $Q_0 - (F_0 \setminus \{f, e\})$. We can select edges from the path $P^0_{xy}$ based on the rules listed below.

(i) If $P^0_{xy}$ containing $f$ and $e$, then $f, e$ are chosen.

(ii) If $P_{xy}^0$ path passes through only one edge of edges $f$ and $e$, then $f$ is chosen. Next, we choose an edge of $P_{xy}^0$ that is neither adjacent to $f$ nor incident with $F_c$. Since $|E(P^0_{xy})| = 2^{n-1} - 1 > 3 + 2$, this is possible all the time.

(iii) Otherwise, we select edges from $P^0_{xy}$ that are not adjacent to $F_c$. Since $|E(P^0_{xy})| > 4$, this is possible all the time. 

Let $u_0v_0$ and $w_0z_0$ denote the chosen edges. Now, $\{u_0u_1, v_0v_1, w_0w_1, z_0z_1\} \cap F_c \neq \emptyset$, let $u_0u_1 \in F_c$. We select a neighbor $u'_0$ of $u_0$ with $u_0u'_0 \notin F_0$, and $v'_0$ is adjacent to  $u'_0$ on other side $P^0_{xy}$ such that $p(v_0)\neq p(v'_0)$. Since $w_1, z_1, v_1, v'_1$ are distinct vertices, using Theorem \ref{dt}, a spanning 2-path $P^1_{v_1 v'_1}+ P^1_{w_1z_1}$ exists in $Q_{n-1}^1$. Thus, the required Hamiltonian path in $Q_n - F$ is $P_{xy} = P^0_{xy} + P^1_{v_1 v'_1}+ P^1_{w_1z_1} + \{v_0v_1, v'_0v'_1, w_0w_1, z_0z_1, u_0u'_0\} - \{u_0v_0, u'_0v'_0, w_0z_0\}$, shown in Figure \ref{Fig.7}(b).
\begin{figure}[H]
	\centering
	\includegraphics[width=0.9\linewidth, height=0.26\textheight]{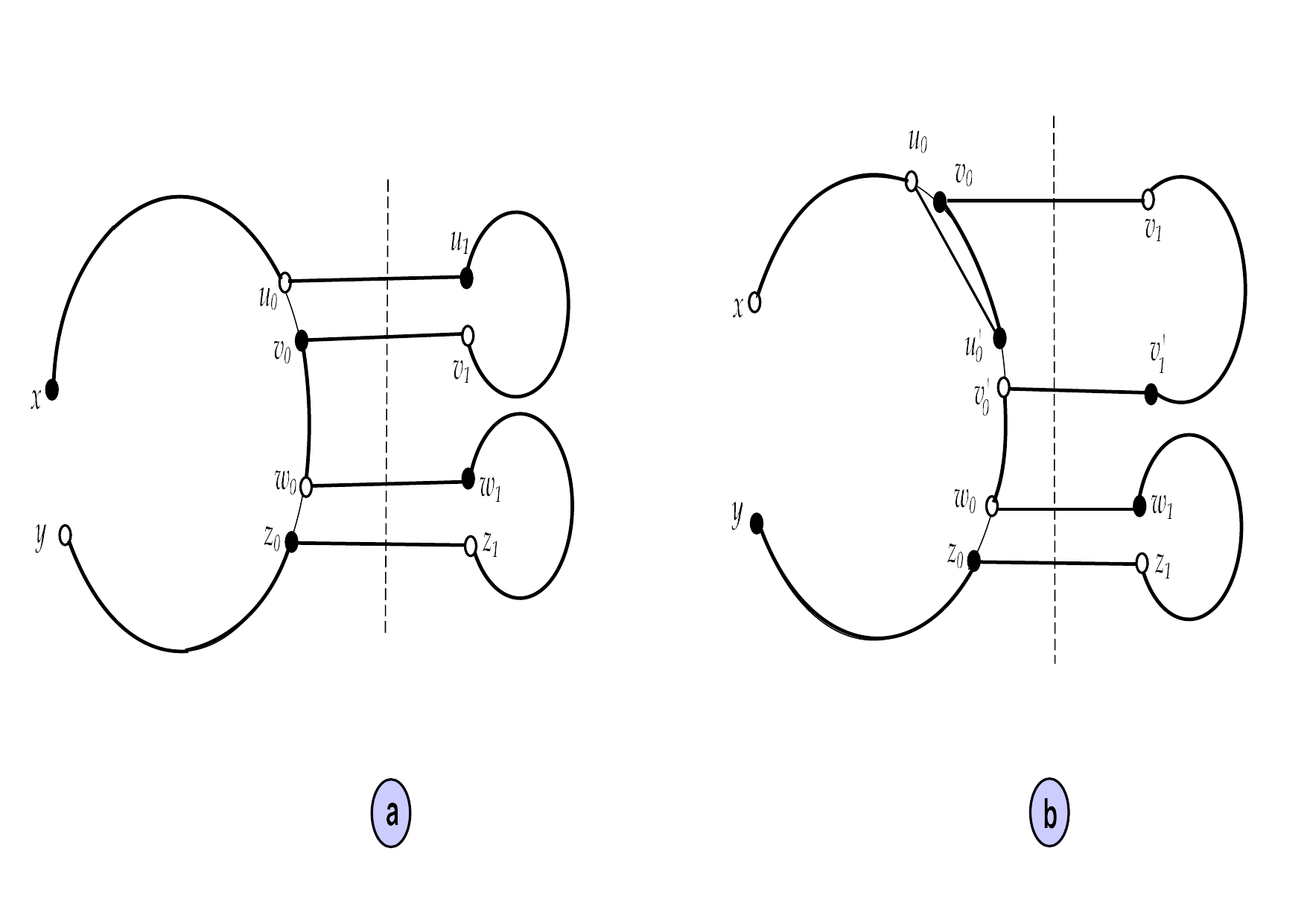}
	\caption[A]{Depiction of Subcase 3.1.}\label{Fig.7}
\end{figure}

Subcase 3.2. $y \in V(Q_{n-1}^1)$ and $x \in V(Q_{n-1}^0)$.

Subcase 3.2.1: There exist disjoint edges $\{u_0v_0, w_0z_0\} \in F_0$ that are not adjacent to $F_c$.

Choose $r_0 \in V(Q_{n-1}^0) \setminus \{u_0, v_0, w_0, z_0\}$ with $p(x) \neq p(r_0)$ and $r_0r_1 \notin F_c$. Since $2^{n-2} - 1 > |F_c|$, such an $r_0$ always exists. Furthermore, $p(y) \neq p(r_1)$, and the vertices $u_1, v_1, w_1, z_1, r_1, y$ are distinct. Since $|F_0 \setminus \{u_0v_0, w_0z_0\}| = 4(n - 1) - 17$, by induction a Hamiltonian path $P^0_{xr_0}$  exists in $Q_{n-1}^0 - (F_0 \setminus \{u_0v_0,  w_0z_0\})$. Since $|F_1| \leq 1$, if $\{u_0v_0,  w_0z_0\} \notin E(P^0_{xr_0})$, then according to Theorem \ref{n1}, a Hamiltonian path $P^1_{r_1y}$ exists in $Q_{n-1}^1 - F_1$. Thus,  $P_{xy} = P^0_{xr_0} + r_0r_1 + P^1_{r_1y}$ is required Hamiltonian path in $Q_n-F$. If one of these edges are not in $E(P^0_{xr_0})$ i.e., $\{u_0v_0\} \notin E(P^0_{xr_0})$, for $n-1\geq 6$, then using Theorem \ref{TC}, a spanning 2-path $P^1_{r_1y} + P^1_{w_1z_1}$ exists in $Q_{n-1}^1 - F_1$. Thus,  $P_{xy} = P^0_{xr_0} + P^1_{r_1y}+ P^1_{w_1z_1}+ \{r_0r_1, w_0w_1, z_0z_1\} - w_0z_0$ is required Hamiltonian path in $Q_n-F$. Otherwise, if $\{u_0v_0,  w_0z_0\} \in E(P^0_{xr_0})$, a spanning 3-path $P^1_{u_1v_1} + P^1_{w_1z_1}+ P^1_{r_1y}$ exists by Lemma \ref{new4} in $Q_{n-1}^1 - F_1$. The required Hamiltonian path is $P_{xy} = P^0_{xr_0} + P^1_{u_1v_1} + P^1_{w_1z_1}+ P^1_{r_1y} + \{u_0u_1, w_0w_1, z_0z_1, v_0v_1, r_0r_1\} - \{u_0v_0, w_0z_0\}$, see Figure \ref{Fig.4}(a). 

Subcase 3.2.2. Except one all edges in $F_0$ are adjacent to $F_1$.

Since $|F_c| = 2$ implying $F_1 = \emptyset$. As we know $|F_0| = 4n - 19 >  (n - 4) + (n - 3) $, there must be an edge not adjacent to $F_c$. Let $w_0z_0 \in F_0$ not adjacent to $F_c$, and $u_0v_0 \in F_0$ that is adjacent to $F_c$. Let $\{u_0v_0, w_0z_0\}\in F_0$, choose $r_0 \in V(Q_{n-1}^0) \setminus \{u_0, v_0, w_0, z_0\}$ such that $p(r_0) \neq p(x)$ and $r_0r_1 \notin F_c$. Using induction hypothesis, a Hamiltonian path $P^0_{xr_0}$ exists in $Q_{n-1}^0 - (F_0 \setminus \{u_0v_0, w_0z_0\})$. If $\{u_0v_0, w_0z_0\} \notin E(P^0_{xr_0})$, then by Theorem \ref{h84}, a Hamiltonian path $P^1_{r_1y}$  exists in $Q_{n-1}^1$. Thus, the Hamiltonian path is $P_{xy} = P^0_{xr_0} + r_0r_1 + P^1_{r_1y}$. If one of these edges are not in $E(P^0_{xr_0})$ i.e, $\{u_0v_0\} \notin E(P^0_{xr_0})$, then a spanning 2-path $P^1_{r_1y} + P^1_{w_1z_1}$ exists by Theorem \ref{dt} in $Q_{n-1}^1$. Thus,  $P_{xy} = P^0_{xr_0}+ P^1_{r_1y}+ P^1_{w_1z_1}+ \{r_0r_1, w_0w_1, z_0z_1\} - w_0z_0$ is required Hamiltonian path in $Q_n-F$. Otherwise, if $\{u_0v_0, w_0z_0\} \in E(P^0_{xr_0})$ and $u_0u_1\in F_c$, we select a neighbor $u'_0$ of $u_0$ with $u_0u'_0 \notin F_0$, and $v'_0$ is adjacent to  $u'_0$ on other side of $P^0_{xr_0}$, such that $p(v_1)\neq p(v'_1)$. Since $r_1, y, v_1, v'_1, w_1, z_1$ are distinct vertices and $\{r_1, y, v_1, v'_1, w_1, z_1\}$ are balanced, by Theorem \ref{a8}, a spanning 3-path $P^1_{v_1v'_1}+ P^1_{yr_1} + P^1_{w_1z_1}$ exists in $Q_{n-1}^1$. Thus, the required Hamiltonian path in $Q_n-F$ is $P_{xy} = P^0_{xr_0} + P^1_{v_1v'_1}+ P^1_{yr_1} + P^1_{w_1z_1} + \{w_0w_1, z_0z_1, v_0v_1, v'_0v'_1, r_0r_1, u_0u'_0\} - \{u_0v_0, u'_0v'_0, w_0z_0\}$, see Figure \ref{Fig.4}(b).
\begin{figure}[H]
	\centering
	\includegraphics[width=0.9\linewidth, height=0.26\textheight]{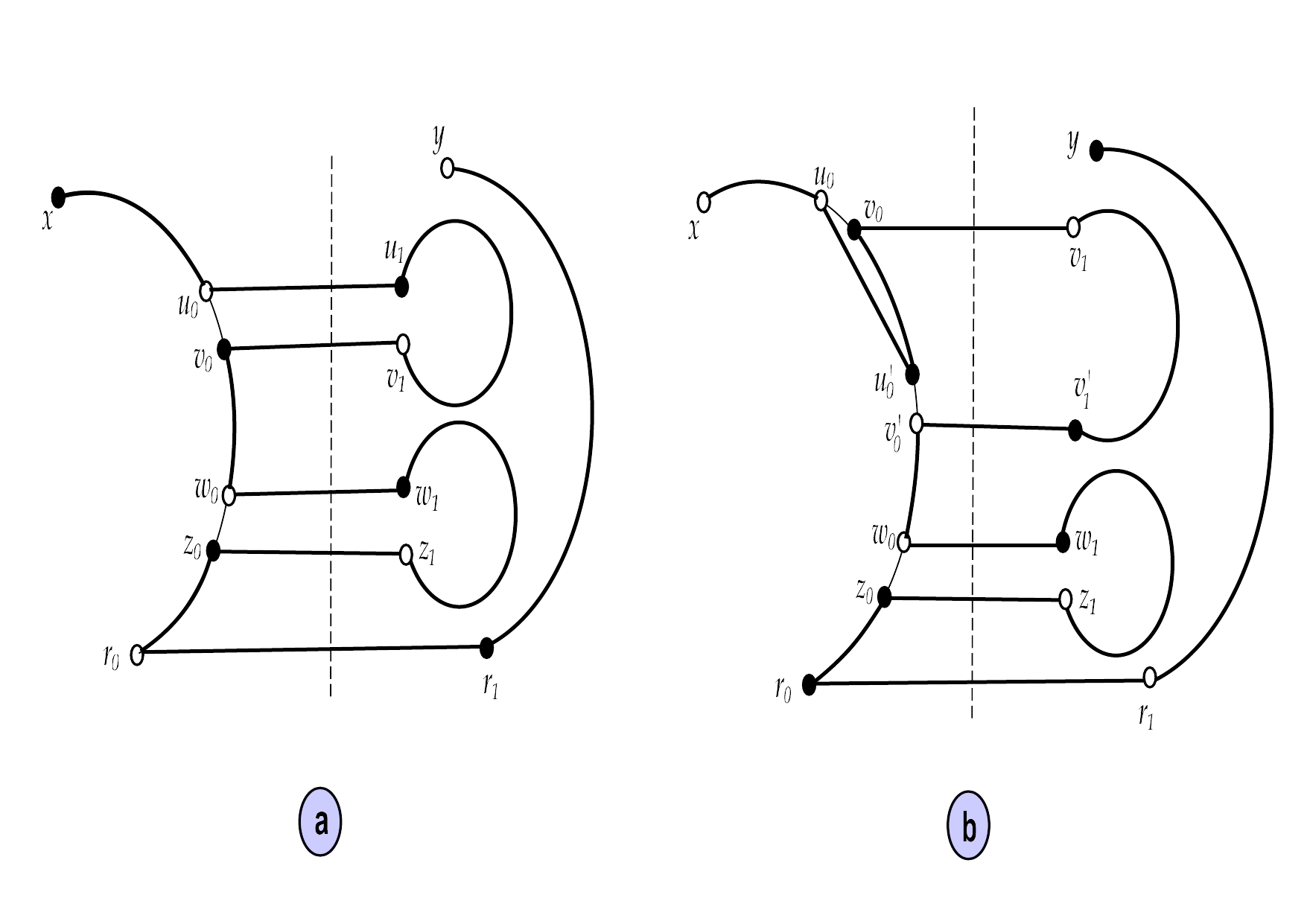}
	\caption[A]{Depiction of Subcase 3.2.}\label{Fig.4}
\end{figure}

Subcase 3.3. \( x, y \in V(Q_{n-1}^1) \).  

Given that \( |F_0| = 4n - 19  \), choose disjoint edges \{$u_0v_0, w_0z_0\}\in F_0$ such that $\{u_1, v_1, w_1, z_1\}\in V(Q_{n-1}^1\setminus\{x, y\})$ and $\{u_0u_1, v_0v_1, w_0w_1, z_0z_1\}\cap F_c = \emptyset$. Since $|F_0 \setminus \{u_0v_0, w_0z_0\}| = 4(n - 1) - 17$, by induction there exists a Hamiltonian path \( P_{u_0v_0}^0 \) in \( Q_{n-1}^0 - (F_0\setminus \{u_0v_0, w_0z_0\}) \). If $w_0z_0 \in  E(P_{u_0v_0}^0) $, such that \( p(u_1)\neq p(x) \) and $ p(v_1)\neq p(y)$. Since \( |F_1| \leq 1 \), by Lemma \ref{new4} or Theorem \ref{a8} (when $F_1=\emptyset$) a spanning 3-path \( P^1_{xu_1} + P^1_{yv_1} + P^1_{w_1z_1}\) exists in \( Q_{n-1}^1 - F_1 \). Thus, the required Hamiltonian path in \( Q_n - F \) is  $P_{xy} = P^0_{u_0v_0} + P^1_{xu_1} + P^1_{yv_1} + P^1_{w_1z_1} + \{u_0u_1, v_0v_1, w_0w_1, z_0z_1\}- w_0z_0 $, shown in Figure \ref{Fig.5}(a). If $w_0z_0 \notin  E(P_{u_0v_0}^0)$, we proceed as in Subcase 2.3.

If $|F_c| = 2$ implying $F_1 = \emptyset$. Since $|F_0| = 4n - 19 >  (n - 4) + (n - 3) $ for $n \geq 7$, there must be an edge not adjacent to $F_c$. If $w_0z_0 \in  E(P_{u_0v_0}^0)$, let $w_0z_0 \in F_0$ that is adjacent to $F_c$. So, $\{w_0w_1, z_0z_1\}\cap F_c \neq \emptyset$, let $w_0w_1\in F_c$, we select a neighbor $w'_0$ of $w_0$ with $w_0w'_0\notin F_0$, and $z'_0$ is adjacent to  $w'_0$ on other side of $P^0_{u_0v_0}$, such that $p(z_1)\neq p(z'_1)$. Since $x, y, z_1, z'_1, u_1, v_1$ are distinct vertices with $p(x)\neq p(u_1)$ and $p(y)\neq p(v_1)$, using Theorem \ref{a8}, a spanning 3-path $P^1_{xu_1}+ P^1_{yv_1} + P^1_{z_1z'_1}$ exists in $Q_{n-1}^1$. Thus, the required Hamiltonian path in $Q_n-F$ is $P_{xy} = P^0_{u_0v_0} + P^1_{xu_1}+ P^1_{yv_1} + P^1_{z_1z'_1} + \{u_0u_1, v_0v_1, z_0z_1, z'_0z'_1, w_0w'_0\} -  \{w_0z_0, w'_0z'_0\}$, see in Figure \ref{Fig.5}(b). If $w_0z_0 \notin  E(P_{u_0v_0}^0)$,  we proceed as in Subcase 2.3.
\begin{figure}[H]
	\centering
	\includegraphics[width=0.9\linewidth, height=0.26\textheight]{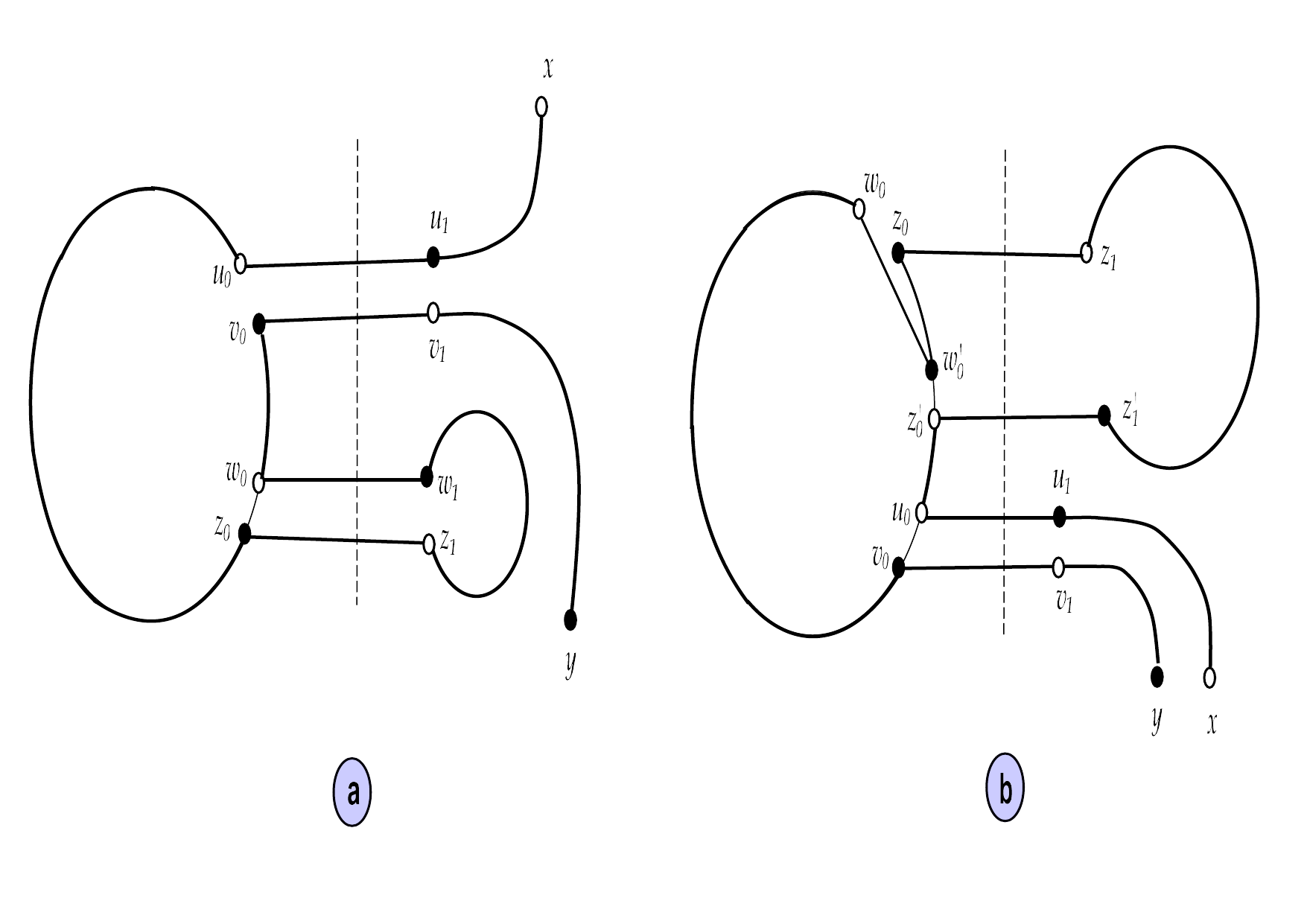}
	\caption[A]{Depiction of Subcase 3.3.}\label{Fig.5}
\end{figure}

Case 4: $|F_0| = 4n - 18$.

Now we have $|F| = 4n - 17>n$, with $|F_0| = 4n - 18 > 3n - 14$. Moreover, $|F_c| = 1$, where $F_c$ consists of a single edge ${t_0t_1}$ with $t_0\in V(Q_{n-1}^0)$. This case can only arise in the proof of Lemma \ref{new1} in Subcase 2.3. Thus, we can consider, for any vertex $v\in V(Q_n)-t_0$ that is $d_{Q_n-F}(t_0) = 2$ and $d_{Q_n-F}(v) \geq 4$.

First, consider a vertex $u$ exists fulfilling $d_{Q_n-F}(u) = 4$. For $n \geq 7$, since $(n - 2) + (n - 4) > n$ there must be a $k$ direction  where $uu_k\in F$ and $t_0t_k\in F$. Divide $Q_n$ at $k$ direction, the faulty subcube containing $t_0$ has exactly one vertex having degree $2$, while all other vertices in their respective faulty subcubes have a degree of at least $3$. Thus, $\delta (Q_{n-1,k}^{\theta} - F_{\theta,k})\leq 2$, mostly one vertex having degree $2$ in $Q_{n-1,k}^{\theta} - F_{\theta,k}$ for each $\theta \in \{0, 1\}$. If $t_0t_k \neq uu_k$, then $|F\cap E_k|=2$, with $|F_{\theta,k}| \leq 4n - 19$ for both $\theta \in \{0, 1\}$. We can reduce this situation to Cases 1, 2, or Case 3. If $uu_k = t_0t_k$ (i.e., $w_k = u$), then $|E_k \cap F| \geq 1$ and $|F_{\theta,k}| \leq 4n - 17 - (n - 4) \leq 4n - 21$ for all $\theta \in \{0, 1\}$, This reduces to the same situation as Case 1. So the below we  may consider for any vertex $v_0 \in V(Q_n)-t_0$ that is $d_{Q_n-F}(v) \geq 5$.

Subcase 4.1. $x, y\in V(Q_{n-1}^0)$.

Choose three disjoint edges $f, e, g\in F_0$ such that $t_0\notin V(f)\cup V(e)\cup V(g)$. For $n \geq 7$, since $|F_0| = 4n - 18> (n - 5) + (n - 3) + (n - 5)$ this is possible all the time. Since $|F_0 \setminus \{f, e, g\}| = 4(n - 1) - 17$, and by induction a Hamiltonian path $P^0_{xy}$ exists in $Q_{n-1}^0 - (F_0 \setminus \{f, e, g\})$. We select three disjoint edges from $P^0_{xy}$ according to the below rules.

(i) If $P^0_{xy}$ contains both $f, e$ and $g$, these edges can be chosen.

(ii) If the path $P_{xy}^0$ containing only one of edges $f, e$ and $g$, then $f$ is chosen. Next, we choose edges of $P_{xy}^0$ that are neither adjacent to $f$ nor incident with $t_0$. Since $|E(P^0_{xy})| = 2^{n-1} - 1 > 6$, this is possible all the time.

(iii) If the path $P_{xy}^0$ containing two edges of $f, e$ and $g$, then $f, e$ are chosen. Next, we choose an edge of $P_{xy}^0$ that is neither adjacent to $f$ and $e$ nor incident with $t_0$. Since $|E(P^0_{xy})| = 2^{n-1} - 1 > 5$, this is possible all the time.

(iv) Otherwise, choose three disjoint edges from $P^0_{xy}$ are chosen, ensuring that are not incident with $t_0$. Since $|E(P^0_{xy})| > 5$, this is possible all the time.

Suppose these edges denoted by $u_0v_0, w_0z_0$ and $s_0r_0$, we ensure $u_0, v_0, w_0, z_0, s_0$, and $r_0$ are distinct, and $t_0t_1 \notin \{u_0u_1, v_0v_1,w_0w_1, z_0z_1, s_0s_1, r_0r_1\}$.  The vertices $\{u_0, v_0, w_0, z_0, s_0, r_0\}$ are balanced, by Theorem \ref{a8}, a spanning 3-path $P^1_{u_1v_1} + P^1_{w_1z_1} + P^1_{s_1r_1}$ exists in $Q_{n-1}^1$. Hence, the Hamiltonian path $P_{xy}$ in $Q_n - F$ is $P_{xy} =P^0_{xy} + P^1_{u_1v_1} + P^1_{w_1z_1} + P^1_{s_1r_1}+ \{u_0u_1, w_0w_1, z_0z_1, v_0v_1, s_0s_1, r_0r_1\}-\{u_0v_0, w_0z_0, s_0r_0\}$.

Subcase 4.2. $y \in V(Q_{n-1}^1)$ and $x \in V(Q_{n-1}^0)$.

Since $|F_0| = 4n - 18 > (n - 5) + (n - 5) + (n - 3)$ for $n\geq7$, three disjoint edges $f, e$ and $g$ exist in $F_0$ such that $V(\{f, e, g\})\cap \{t_0, y_0\}=\emptyset$. Choose a vertex $m_0$ in $Q_{n-1}^0$ satisfying $p(m_0) \neq p(x)$ and $m_0 \notin \{t_0\} \cup V(\{f, e, g\})$, such an $m_0$ always exists as $2^{n-2} > 6$. Moreover, $p(m_1) \neq p(y)$, and $m_0m_1 \neq t_0t_1$. Since $|F_0 \setminus \{f, g, g\}| = 4(n - 1) - 17$, using induction, a Hamiltonian path $P^0_{xm_0}$ exists in $Q_{n-1}^0 - (F_0 \setminus \{f, e, g\})$. We select three disjoint edges from $P^0_{xy}$ according to the rules listed below.
 
(i) If $P^0_{xy}$ contains both $f, e$ and $g$, these edges can be chosen.
 
(ii) If the path $P_{xy}^0$ contains only one of edges $f, e$ and $g$, then $f$ is chosen. Next, we select edges of $P_{xy}^0$ that is non-adjacent to ${m_0, t_0, y_0}$. Since $|E(P^0_{xy})| = 2^{n-1} - 1 > 5 + 4$, this is possible always.
 
(iii) If the path $P_{xy}^0$ containing two edges of $f, e$ and $g$, then $f, e$ is chosen. Next, we choose edges of $P_{xy}^0$ that is non-adjacent to ${m_0, t_0, y_0}$. Since $|E(P^0_{xy})| = 2^{n-1} - 1 > 4 + 4$, this is possible always.
 
(iv) Otherwise, select three disjoint edges from $P^0_{xy}$ are chosen, ensuring that are not incident with ${m_0, t_0, y_0}$. Since $|E(P^0_{xy})| > 8$, this is possible always.

Suppose these edges are represented by $u_0v_0, w_0z_0$ and $s_0r_0$, we ensure $u_1, v_1, w_1, z_1, s_1, r_1, m_1$, and $y$ are distinct, and $t_0 \notin \{u_0, v_0, w_0, z_0, s_0, r_0, m_0\}$. Since $\{u_1, v_1, w_1, z_1, s_1, r_1, m_1, y\}$ are balanced, by Theorem \ref{a8}, a spanning 4-path $P^1_{u_1v_1} + P^1_{w_1z_1} + P_{s_1r_1} + P^1_{m_1y}$ exists in $Q_{n-1}^1$. Hence, $P_{xy}$ in $Q_n - F$ is $P_{xy} =P^0_{xm_0} + P^1_{u_1v_1} + P^1_{w_1z_1} + P^1_{s_1r_1} + P^1_{m_1y}+ \{u_0u_1, w_0w_1, z_0z_1, v_0v_1, s_0s_1, r_0r_1, m_0m_1\}-\{u_0v_0, w_0z_0, s_0r_0\}$ is the Hamiltonian path, see in Figure \ref{Fig.6}(a).
\begin{figure}[H]
	\centering
	\includegraphics[width=0.96\linewidth, height=0.26\textheight]{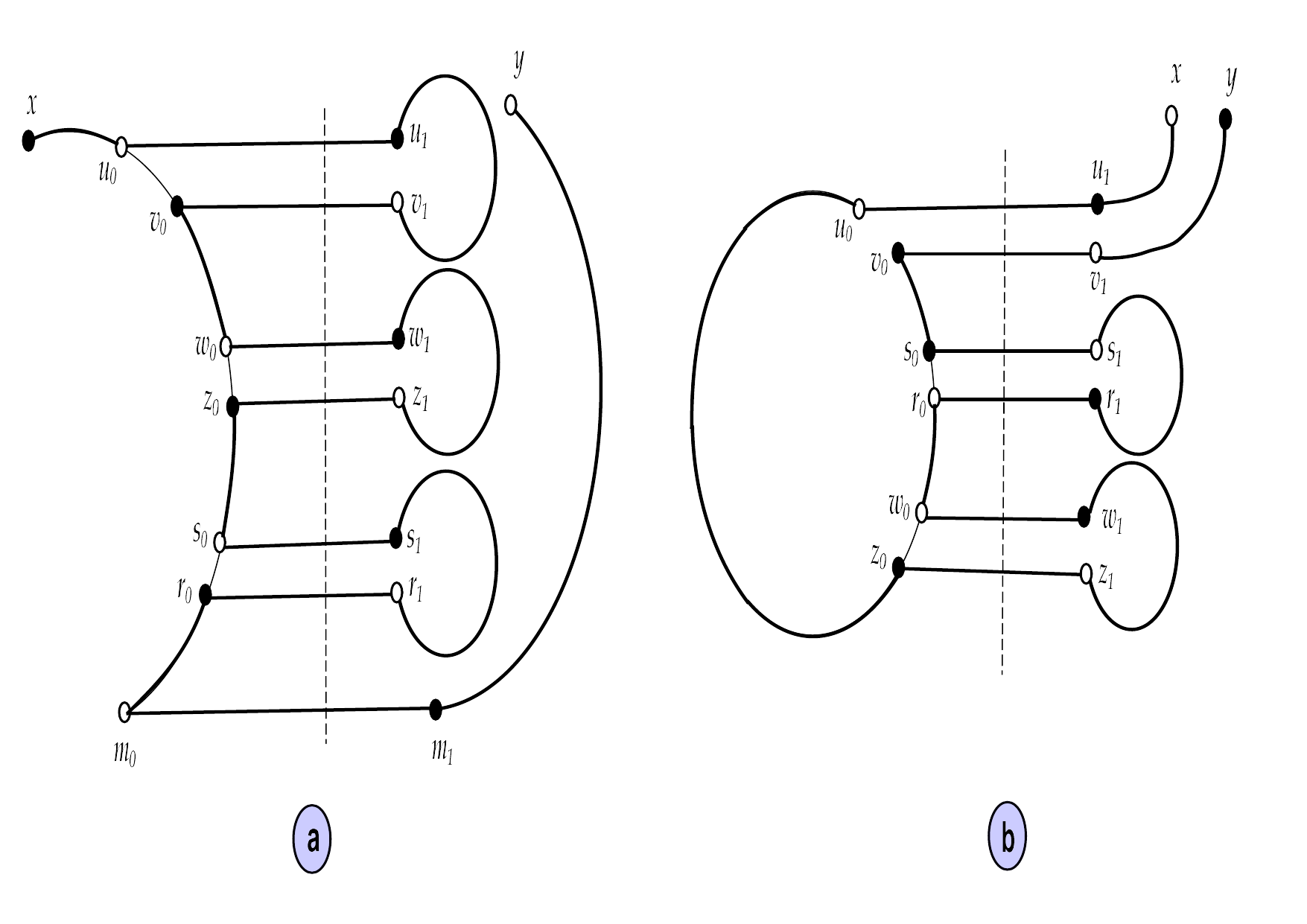}
	\caption[A]{Illustration of Subcases 4.2 and 4.3.}\label{Fig.6}
\end{figure}

Subcase 4.3. $x, y\in V(Q_{n-1}^1)$.

Given that \( |F_0| = 4n - 18  \), choose edges $\{u_0v_0, w_0z_0, s_0r_0\}\in F_0$ such that $\{u_1, v_1, w_1, z_1, s_1, r_1\}\in V(Q_{n-1}^1\setminus\{x, y\})$ and $\{u_0v_0, w_0z_0, s_0r_0\} \cap F_c = \emptyset$. Since $|F_0 \setminus \{u_0v_0, w_0z_0, s_0r_0\}| = 4(n - 1) - 17$, by induction hypothesis, a Hamiltonian path \( P_{u_0v_0}^0 \) exists in \( Q_{n-1}^0 - (F_0\setminus \{u_0v_0, w_0z_0, s_0r_0\}) \). If $\{w_0z_0, s_0r_0\} \in  E(P_{u_0v_0}^0) $, since \( p(u_1)\neq p(x) \) and $ p(v_1)\neq p(y)$, and $\{u_1, x, v_1, y, s_1, r_1, w_1, z_1\}$ are balanced, by Theorem \ref{a8}, s a spanning 4-path \( P^1_{xu_1} + P^1_{yv_1} + P^1_{w_1z_1}+ P_{s_1r_1}^1\) exists in \( Q_{n-1}^1 \). Thus, the required Hamiltonian path in \( Q_n - F \) is  $P_{xy} = P^0_{u_0v_0} + P^1_{xu_1} + P^1_{yv_1} + P^1_{w_1z_1}+ P_{s_1r_1}^1 + \{u_0u_1, v_0v_1, w_0w_1, z_0z_1, s_0s_1, r_0r_1\}-\{w_0z_0, s_0r_0\}$, see in Figure \ref{Fig.6}(b). If $w_0z_0 \notin  E(P_{u_0v_0}^0)$, we proceeds as in Subcase 3.3.

\subsection*{Data availability}
{\small No new data were generated or analyzed in support of this research.}
\subsection*{Conflict of interest}
{\small The authors have no conflict of interest.}
\subsection*{Acknowledgments} 
{\small We will express our appreciation to the anonymous referees for their productive feedback to improve the clarity of paper.}


\begin{thebibliography}{99} \small
\bibitem{new1} Andre, F., and J. P. Verjus. Hypercubes and Distributed Computers, \textit{Horth-Halland, Amsterdam, New York}. 1989.

\bibitem{new2} Simmons, Gustavus J. Almost all n-dimensional rectangular lattices are Hamilton-laceable. \textit{In Presented at the 9th Southeastern Conf. on Combinatoric Graph Theory and Computing}. 1977.

\bibitem{new3} Castañeda, Nelson, and Ivan S. Gotchev. Embedded paths and cycles in faulty hypercubes. \textit{Journal of combinatorial optimization} \textbf{20}, no. 3 (2010): 224-248. 

\bibitem{new4} Dvořák, Tomáš, and Václav Koubek. Computational complexity of long paths and cycles in faulty hypercubes. \textit{Theoretical computer science} \textbf{411}, no. 40-42 (2010): 3774-3786.

\bibitem{new5} Fink, Jiří, and Petr Gregor. Long paths and cycles in hypercubes with faulty vertices. \textit{Information Sciences 179}, no. \textbf{20} (2009): 3634-3644.

\bibitem{new6} Kueng, Tz-Liang, Tyne Liang, Lih-Hsing Hsu, and Jimmy JM Tan. Long paths in hypercubes with conditional node-faults. \textit{Information Sciences} \textbf{179}, no. 5 (2009): 667-681.

\bibitem{new7} Szepietowski, Andrzej. Hamiltonian cycles in hypercubes with 2n-4 faulty edges. \textit{Information Sciences} \textbf{215} (2012): 75-82.

\bibitem{new8} Tsai, Chang-Hsiung, Jimmy JM Tan, Tyne Liang, and Lih-Hsing Hsu. Fault-tolerant hamiltonian laceability of hypercubes. \textit{Information Processing Letters} \textbf{83}, no. 6 (2002): 301-306.

\bibitem{new9} Dvořák, Tomáš, and Petr Gregor. Hamiltonian fault-tolerance of hypercubes. \textit{Electronic Notes in Discrete Mathematics} \textbf{29} (2007): 471-477.

\bibitem{new10} Li, Chunfang, Shangwei Lin, and Shengjia Li. Hamiltonian cycle embeddings in faulty hypercubes under the forbidden faulty set model. \textit{International Journal of Foundations of Computer Science} \textbf{32}, no. 01 (2021): 53-72.

\bibitem{new11} Wang, Fan, and Heping Zhang. Hamiltonian laceability in hypercubes with faulty edges. \textit{Discrete Applied Mathematics} \textbf{236} (2018): 438-445.

\bibitem{new12} Tsai, Chang-Hsiung. Linear array and ring embeddings in conditional faulty hypercubes. \textit{Theoretical Computer Science} \textbf{314}, no. 3 (2004): 431-443.

\bibitem{new13} Liu, Jia-Jie, and Yue-Li Wang. Hamiltonian cycles in hypercubes with faulty edges. \textit{Information Sciences} \textbf{256} (2014): 225-233.

\bibitem{new14} X.B. Chen, Paired many-to-many disjoint path covers of hypercubes with faulty edges, \textit{Inform. Process. Lett.} \textit{112} (2012) 61–66.

\bibitem{new15} Lewinter, Martin, and William Widulski. Hyper-Hamilton laceable and caterpillar-spannable product graphs. \textit{Computers and Mathematics with Applications} \textbf{34}, no. 11 (1997): 99-104.

\bibitem{bondy} J. Bondy, U. Murty, Graph Theory with Applications, \textit{Macmillan Press, London}, 1976.

\bibitem{H84} Havel, Ivan. On Hamiltonian circuits and spanning trees of hypercubes. \textit{Časopis pro pěstování matematiky} \textbf{109}, no. 2 (1984): 135-152.

\bibitem{1} Gros, Louis. Théorie du baguenodier. \textit{Aimé Vingtrinier}, Lyon, 1872.

\bibitem{CM91} Chan, Mee Yee, and Shiang-Jen Lee. On the existence of Hamiltonian circuits in faulty hypercubes. \textit{SIAM Journal on Discrete Mathematics} \textbf{4}, no. 4 (1991): 511-527. 

\bibitem{T07}	Tsai, Chang-Hsiung; LAI, Yung-Chun. Conditional edge-fault-tolerant edge-bipancyclicity of hypercubes. \textit{Information Sciences}, \textbf{177}.24: 5590-5597 (2007).

\bibitem{GP08} Gregor, Petr, and Tomáš Dvořák. Path partitions of hypercubes. \textit{Information Processing Letters} \textbf{108}, no. 6 (2008): 402-406.

\bibitem{DT05} Dvorák, Tomás. Hamiltonian cycles with prescribed edges in hypercubes. \textit{SIAM Journal on Discrete Mathematics} \textbf{19}, no. 1 (2005): 135-144.

\bibitem{Dt09} Dimitrov, D., Dvorák, T., Gregor, P., Škrekovski, R. Gray codes avoiding matchings. \textit{Discret. Math.Theor. Comput}. Sci. \textbf{11}, 123–148 (2009)
	\end{thebibliography}
\end{document}